\numberwithin{equation}{section}
\newtheorem{theorem}{Theorem}[section]
\newtheorem{utv*}{Proposition}
\newtheorem{hyp*}{Conjecture}
\newtheorem*{example*}{Example}
\newtheorem{lemma}[theorem]{Lemma}
\newtheorem{corollary}[theorem]{Corollary}
\newtheorem*{th*}{Theorem}
\newtheorem{prop}[theorem]{Proposition}
\theoremstyle{definition}
\newtheorem{zamech}[theorem]{Remark}
\newtheorem{defin}[theorem]{Definition}
\def\sli{\sum\limits}
\def\ili{\int\limits}
\def\a{\alpha}
\def\R{\mathbb{R}}
\def\ep{\varepsilon}
\def\vf{\varphi}
\def\E{\mathbb{E}}
\def\X{\mathcal{X}}
\def\H{\mathcal{H}}
\def\card{\textup{card}}
\newcommand{\diam}{\operatorname{diam}}
\newcommand{\dist}{\operatorname{dist}}
\newcommand{\mE}{\mathcal{E}}
\renewcommand{\P}{\mathbb{P}}
\def\cyr{\fontencoding{OT2}\fontfamily{wncyr}\selectfont}
\DeclareTextFontCommand{\textcyr}{\cyr}
\newcounter{vremennyj}
\begin{document}

\title[]{The Covering Radius of Randomly Distributed Points on a Manifold}

\date{\today}
\maketitle

\begin{center}
\textrm{A. Reznikov \footnote{\label{note1}The research of the authors was supported, in part, by the National Science Foundation grants DMS-1109266 and DMS-1412428}} and E. B. Saff \footnotemark[1]

\begin{small}Center for Constructive Approximation, Department of Mathematics, Vanderbilt University, Nashville, TN 37240
\vspace{.1cm}

e-mails:  aleksandr.b.reznikov@vanderbilt.edu,\,\, edward.b.saff@vanderbilt.edu \end{small}
\end{center}
\date{}

\begin{abstract} We derive fundamental asymptotic results for the expected covering radius $\rho(X_N)$ for
$N$ points that are randomly and independently distributed with respect to surface measure on a sphere as well as on a class of
smooth manifolds. For the unit sphere $\mathbb{S}^d \subset \mathbb{R}^{d+1}$, we obtain the
precise asymptotic that $\E\rho(X_N)[N/\log N]^{1/d}$ has limit $[(d+1)\upsilon_{d+1}/\upsilon_d]^{1/d}$
as $N \to \infty $, where $\upsilon_d$ is the volume of the  $d$-dimensional unit ball. This proves
a recent conjecture of Brauchart et al. as well as extends a result previously known only for the circle.
Likewise we obtain precise asymptotics for the expected covering radius of $N$ points randomly distributed
on a $d$-dimensional ball, a $d$-dimensional cube, as well as on a 3-dimensional polyhedron (where the points are independently distributed with
respect to volume measure).  More generally, we deduce upper and lower bounds for the expected covering radius of $N$ points that are randomly and independently distributed on a metric measure space, provided  the measure satisfies certain
regularity assumptions.

\end{abstract}

\vspace{4mm}

\footnotesize\noindent\textbf{Keywords:} Covering radius, Random points, Random spherical caps, Local statistics, Mesh norm, Epsilon nets

\vspace{2mm}

\noindent\textbf{Mathematics Subject Classification:} Primary: 52C17, 60D05; Secondary: 60E05, 62E20

\vspace{2mm}

\normalsize

\section{Introduction and Notation}
The purpose of this paper is to obtain asymptotic results for the expected value of the covering
radius of $N$ points $X_N=\{x_1, x_2,\ldots, x_N\}$ that are randomly and independently distributed with respect to a given measure $\mu$
over a metric space $(\X, m)$. By the \emph{covering radius} $\rho(X_N, \mathcal{X})$ (also known as the \emph{mesh norm }or \emph{fill radius}) of the set $X_N$ with respect to $\X$, we mean the radius of the largest neighborhood centered at a point of $\X$ that contains no points of $X_N$; more precisely,
$$
\rho(X_N, \mathcal{X}):=\sup_{y\in \mathcal{X}}\inf_j m(y, x_j).
$$
Our focus is on the limiting behavior as $N \to \infty$ of the expected value $\E\rho(X_N, \mathcal{X}).$ \

The covering radius of a discrete point set is an important characteristic that arises in a variety of contexts.  For example, it plays an
  essential role in determining the accuracy of various numerical approximation schemes such as those involving radial basis techniques (see, e.g. \cite{FW}, \cite{MNPW}). Another area where the covering radius arises is in ``1-bit sensing'', i.e., the problem of approximating an unknown vector (signal) $x\in K$ from knowledge of $m$ numbers $\textup{sign}\langle x, \theta_j \rangle$, $j=1,\ldots, m$, where the vectors $\theta_j$ are selected independently and randomly on a sphere; see discussion after Corollary \ref{nets} for details. 

With regard to asymptotics for the expected value of the covering radius, of particular interest is the case where $\X$ is the unit sphere $\mathbb{S}^d$ in $\mathbb{R}^{d+1}$ and the metric is Euclidean
distance in $\mathbb{R}^{d+1}$. In \cite{BSR}, Bourgain, Sarnak and Rudnick study local statistics of certain spherical point configurations derived from normalized
 sums of squares of integers. Their investigation focuses on whether such configurations exhibit features of randomness,
 and for this purpose they study various local statistics, including  the covering radius of random points on $\mathbb{S}^d$. They prove
that this radius is bounded from above by $N^{-1/d +o(1)}$ as $N \to \infty.$\

 For $d=1$, i.e. the unit
circle, it is shown in \cite {DN} by using order statistics, that for $N$ points independently and randomly distributed
with respect to arclength on the circle,
$$ \lim_{N \to \infty}\mathbb{E}\rho(X_N,\mathbb{S}^1)\left[ \frac{N}{\log N}\right] =\pi.
$$
Up to now, there has been no extension of this result to higher-dimensional spheres where the order statistics approach is more elusive.
Based on a heuristic argument and numerical experiments, Brauchart et al. \cite{BHS} have conjectured that the appropriate extension of the circle case
is the following:
\begin{equation}\label{sphereequiv1}
\lim_{N\to\infty} \E\rho(X_N, \mathbb{S}^d)\cdot \left[\frac{N}{\log N }\right]^{1/d}=\left(\frac{(d+1)\upsilon_{d+1}}{\upsilon_d}\right)^{1/d}=\left(2\sqrt{\pi}\frac{\Gamma(\frac{d+2}2)}{\Gamma(\frac{d+1}2)}\right)^{1/d}	,%
\end{equation}
where $\upsilon_d:=\frac{\pi^{d/2}}{\Gamma(1+d/2)}$ is the volume of a $d$-dimensional unit ball in $\R^d$, and the points of $X_N$ are
randomly and independently distributed with respect to surface measure on $\mathbb{S}^d$ (more precisely, $d$-dimensional Hausdorff measure
$\H_d).$  Their conjecture is also consistent with a result of H. Maehara \cite{M} who obtained probabilistic estimates for the size of random
caps that cover the sphere $\mathbb{S}^2$. He showed that with asymptotic probability one, random caps with radii that are a constant factor larger than the expected radii will cover the sphere,
   whereas this asymptotic probability becomes zero when the random caps all have radii that are a factor smaller. However, his results fall short of providing a sharp asymptotic for the expected covering radius (in addition, his methods do not readily generalize to other smooth manifolds). As discussed in Section 3, our results for the sphere cannot be directly derived from Maehara's; however, his results are a direct consequence of our Corollary \ref{corsphere}.

The  main goal of this article is to provide a proof of \eqref{sphereequiv1} and its various generalizations.

We remark that for any compact metric space $(\X, m)$ with $\X$ having finite $d$-dimensional Hausdorff measure, there exists a positive constant
$C$ such that
for any $Y_N=\{y_1,\ldots, y_N\} \subset \X$, there holds
\begin{equation}\label{lowerbound}
\rho_N:=\rho(Y_N,\X) \geq \frac{C}{N^{1/d}},\quad N \geq 1.
\end{equation}
Indeed, a lemma of Frostman (see, e.g. Theorem 8.17 in \cite{Mat}) implies the existence of a finite positive measure $\mu$ on $\X$ for which
$\mu(B(x,r)) \leq (2r)^d$ for all $x \in \X$ and all $0<r\leq \diam(\X),$ where $B(x,r)$ denotes the ball centered at $x$ having
radius $r$. Consequently,
$$0<\mu(\X) \leq \sum_{i=1}^N \mu(B(y_i,\rho_N) )\leq N(2\rho_N)^d,
$$
which verifies \eqref{lowerbound}. Thus, as also remarked in \cite{BSR} and made more explicit by \eqref{sphereequiv1}, randomly distributed
points have relatively good covering properties, differing from the optimal by a factor of $(\log N)^{1/d}$.\

The outline of this paper is as follows. In Section 2, we state our probabilistic and expected covering radius estimates for general compact metric spaces, where the points
are randomly distributed with respect to a measure satisfying certain regularity conditions. Results for compact subsets of
Euclidean space are given in Section 3, including sharp asymptotic results for randomly distributed points with respect
to Hausdorff measure on rectifiable curves, smooth surfaces, bodies with smooth boundaries, $d$-dimensional cubes, and $3$-dimensional polyhedra.
The proofs of our stated results are provided in Section 5 utilizing properties established in Section 4 for a commonly arising probability function.\

We conclude this section with a listing of some notational conventions and terminology that will be utilized throughout the paper.

\begin{itemize}
\item We denote by $B(x,r)$ a closed ball in the metric space $(\X, m);$ more precisely,
 $B(x,r):=\{y\in \mathcal{X}\colon m(y,x)\leqslant r\}$. For $d$-dimensional balls in Euclidean space we write $B_d(x,r).$

\item For a positive finite Borel measure $\mu$ supported on a set $\X$, we say that a point $x$ is {\it randomly distributed over $\X$ with respect to $\mu$}, if it is distributed with respect to the probability measure $\mu/\mu(\X)$; i.e., for any Borel set $K$ it holds that $\P(x\in K) = \mu(K)/\mu(\X)$.
\item For a positive integer $s\leqslant d$, we denote by $\mathcal{H}_s$ the $s$-dimensional Hausdorff measure on the Euclidean space $\R^d$ with the Euclidean metric, normalized by $\mathcal{H}_s([0,1]^s)=1$. Thus, $\H_s(E)=\frac{\pi^{s/2}}{2^s\Gamma(1+s/2)}\H^s(E)$, where $\H^s$ is the Hausdorff measure defined in \cite{Fal}.
\item If $K$ is a subset of the Euclidean space $\R^d$, we always equip it with the Euclidean metric $m(x,y)=|x-y|.$

\item The symbols $c_1, c_2, \ldots$, and $C_1, C_2, \ldots$ shall denote positive constants that may differ from one inequality to another. These constants never depend on $N$.
\end{itemize}
\section{Main Theorems for Metric Spaces}\label{sectionmetric}


Throughout this section, we assume that $(\X, m)$ is a metric space, $\mu$ is a finite positive Borel measure supported on $\X$, and $X_N=\{x_1, \ldots, x_N\}$ is a set of $N$ points, independently and randomly distributed over $\X$ with respect to $\mu$. Our theorems provide estimates for the probability and expected values of the covering radius $\rho(X_N, \X)$ when the measure $\mu$ satisfies certain regularity conditions described by a function $\Phi$.

\begin{theorem}\label{metricabove}
Suppose $\Phi$ is a continuous non-negative strictly increasing function on $(0, \infty)$ satisfying $\Phi(r)\to 0$ as $r\to 0^+$. If there exists a positive number $r_0$ such that $\mu(B(x,r))\geqslant \Phi(r)$ holds for all $x\in \X$ and every $r<r_0$,  
then there exist positive constants $c_1$, $c_2$, $c_3$, and $\a_0$ such that for any $\a>\a_0$ we have
\begin{equation}\label{metricaboveprob}
\P\left[\rho(X_N, \X)\geqslant c_1\Phi^{-1}\left(\frac{\a\log N}{N}\right)\right]\leqslant c_2N^{1-c_3\a}.
\end{equation}
If, in addition, $\Phi$ satisfies $\Phi(r)\leqslant r^\sigma$ for all small $r$ and some positive number $\sigma$, then there exist positive constants $c_1, c_2$ such that
\begin{equation}\label{metricaboveexpect}
\E\rho(X_N, \X) \leqslant c_1 \Phi^{-1}\left(c_2 \frac{\log N }{N}\right).
\end{equation}

\end{theorem}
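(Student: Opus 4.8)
My plan is to proceed in two stages. First I would establish the probabilistic bound \eqref{metricaboveprob} by a union-bound argument over an $\e$-net. The key observation is that if $\rho(X_N,\X)\geq 2r$, then there is a point $y\in\X$ with $B(y,r)\cap X_N=\emptyset$; covering $\X$ by balls $B(z_k,r)$ centered at points $z_k$ of a maximal $r$-separated set (whose cardinality I will bound using the lower regularity $\mu(B(x,r))\geq\Phi(r)$, giving roughly $\mu(\X)/\Phi(r)$ balls), any such empty ball $B(y,r)$ contains some $B(z_k, r/2)$—actually more carefully, I would take the net at scale $r$ and note $B(z_k, r)\subset B(y,2r)$ when $m(y,z_k)\le r$, so $B(z_k,r)$ being hit forces nothing; the clean route is: if no point of $X_N$ lies in $B(y,r)$ and $m(y,z_k)\le r$ then $B(z_k, \text{small})$... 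Let me instead use the standard trick: cover $\X$ with balls of radius $r$; if $\rho(X_N,\X)\ge 3r$ then some ball $B(z_k, r)$ of the cover is disjoint from $X_N$ (since the point $y$ achieving a large empty neighborhood lies in some $B(z_k,r)$, and then $B(z_k,r)\subset B(y,2r)$ which contains no $x_j$). The probability that a fixed ball $B(z_k,r)$ contains no point of $X_N$ is $\left(1-\mu(B(z_k,r))/\mu(\X)\right)^N\le \exp\!\left(-N\Phi(r)/\mu(\X)\right)$. A union bound over the $\lesssim \mu(\X)/\Phi(r)$ centers, with the choice $r = \Phi^{-1}(\a\log N/N)$ (so that $N\Phi(r)/\mu(\X) = (\a/\mu(\X))\log N$), yields $\P[\rho(X_N,\X)\ge 3r]\lesssim \Phi(r)^{-1} N^{-\a/\mu(\X)} \lesssim N^{\a} \cdot N^{-\a/\mu(\X)}$ once I also use $\Phi(r)\ge$ something like $N^{-\a}$; after adjusting constants this gives \eqref{metricaboveprob} with $c_1 = 3$ (or $c_1$ absorbing the covering-number slack) and suitable $c_2,c_3$, valid for $\a>\a_0$.

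Second, I would deduce the expectation bound \eqref{metricaboveexpect} from \eqref{metricaboveprob} by integrating the tail. Write $\rho_N := \rho(X_N,\X)$ and use
\[
\E\rho_N = \int_0^\infty \P[\rho_N \ge t]\,dt.
\]
Split at $t_0 := c_1\Phi^{-1}(\a_1\log N/N)$ for a fixed $\a_1>\a_0$ chosen large enough that $c_3\a_1 > 2$ (say): the contribution of $[0,t_0]$ is at most $t_0 = c_1\Phi^{-1}(\a_1\log N/N)$, which is already of the desired form. For $t>t_0$, I parametrize $t = c_1\Phi^{-1}(\a\log N/N)$ with $\a$ ranging over $(\a_1,\infty)$; since $\Phi^{-1}$ is increasing, $dt = c_1 (\log N/N)\,(\Phi^{-1})'(\a\log N/N)\,d\a$, and \eqref{metricaboveprob} bounds the integrand by $c_2 N^{1-c_3\a}$. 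The extra polynomial-in-$\Phi^{-1}$ factors coming from $dt$ are controlled using the hypothesis $\Phi(r)\le r^\sigma$, equivalently $\Phi^{-1}(u)\le u^{1/\sigma}$, which forces $\Phi^{-1}(\a\log N/N)$ to grow at most polynomially in $\a$ and be at most $O(1)$ in $N$; hence $\int_{\a_1}^\infty (\text{poly in }\a)\, N^{1-c_3\a}\,d\a$ is a convergent integral that is $o(t_0)$, or at worst comparable to $\Phi^{-1}(c_2'\log N/N)$. Combining the two pieces and renaming constants gives \eqref{metricaboveexpect}.

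The main technical obstacle I anticipate is the change-of-variables estimate in the second stage: one must show the Jacobian factor $(\Phi^{-1})'$ and the region of integration do not destroy the smallness of $N^{1-c_3\a}$, and for this the auxiliary hypothesis $\Phi(r)\le r^\sigma$ is exactly what is needed—without it, $\Phi^{-1}$ could blow up too fast and the tail integral might not be dominated by its value at the endpoint. A cleaner way to organize this, avoiding differentiating $\Phi^{-1}$, is to bound the tail integral by a sum: $\int_{t_0}^\infty \P[\rho_N\ge t]\,dt \le \sum_{k\ge 1} t_0 \cdot \P[\rho_N \ge (k) t_0 /\,?]$... but most transparently, since $\rho_N\le \diam(\X) =: D < \infty$ always, I can crudely write $\int_{t_0}^{D}\P[\rho_N\ge t]\,dt \le D\cdot \P[\rho_N\ge t_0] \le D\, c_2 N^{1-c_3\a_1}$, and choosing $\a_1$ with $c_3\a_1 \ge 1 + 1/\sigma$ makes this term $\lesssim N^{-1/\sigma}\le \Phi^{-1}(\log N/N)$ up to constants (using $\Phi^{-1}(\log N/N)\ge (\log N/N)^{1/\sigma}\cdot$const is false in general—rather $\Phi^{-1}(u)\le u^{1/\sigma}$, so I need the reverse; instead I simply observe $N^{1-c_3\a_1}$ is smaller than any fixed negative power of $N$, in particular smaller than $t_0$ for $N$ large, and handle small $N$ by adjusting $c_1$). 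This crude splitting sidesteps the Jacobian entirely and is the route I would ultimately follow.
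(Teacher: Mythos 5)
Your argument follows the paper's proof essentially step for step: a maximal separated net whose cardinality is bounded by $\mu(\X)/\Phi(\cdot)$ via the lower regularity assumption, a union bound giving $\P[\rho(X_N,\X)\geqslant cr]\leqslant \card(\mE)\cdot(1-\Phi(r')/\mu(\X))^N$ with $r=\Phi^{-1}(\a\log N/N)$, and then the crude splitting $\E\rho(X_N,\X)\leqslant t_0+\diam(\X)\cdot\P[\rho(X_N,\X)\geqslant t_0]$ rather than a change of variables in the tail integral --- this last "crude" route is exactly what the paper does. One correction you should internalize: from $\Phi(r)\leqslant r^\sigma$ one gets $u=\Phi(\Phi^{-1}(u))\leqslant \Phi^{-1}(u)^\sigma$, i.e.\ $\Phi^{-1}(u)\geqslant u^{1/\sigma}$; the direction you dismissed as ``false in general'' is in fact the true one, and it is precisely this lower bound $t_0\geqslant c\,(\log N/N)^{1/\sigma}\geqslant cN^{-1/\sigma}$ that lets you absorb $\diam(\X)\,c_2N^{1-c_3\a_1}$ into $t_0$ once $c_3\a_1$ is chosen large --- without it, your closing claim that the error term is ``smaller than $t_0$ for $N$ large'' has no support. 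A second small slip: in the union bound the covering-number factor is $\Phi(r)^{-1}=N/(\a\log N)\leqslant N$, not $N^{\a}$, and it is this factor of $N$ that produces the exponent $1-c_3\a$ in \eqref{metricaboveprob}.
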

A lower bound for the expected covering radius is given in our next result.
\begin{theorem}\label{metricbelow}
Let $\Phi$ be a continuous non-negative strictly increasing function on $(0, \infty)$ satisfying $\Phi(r)\to 0$ as $r\to 0^+$ and the strict doubling property; i.e., for some constants $C_1, C_2>1$ and any small $r$ it holds that $C_1\Phi(r)\leqslant\Phi(2r)\leqslant C_2\Phi(r)$.
If there exists a subset $\X_1\subset \X$ with the following two properties:
\begin{enumerate}[label={\upshape(\roman*)}]
\item $\mu(\X_1)>0$;
\item there exist positive numbers $r_0$ and $c$ such that for any $x\in \X_1$ and every $r<r_0$ the regularity condition $c\Phi(r)\leqslant \mu(B(x,r))\leqslant \Phi(r)$ holds,
\end{enumerate}
then there exist positive constants $c_1$ , $c_2$, and $c_3$ such that
\begin{equation}\label{metricbelowprob}
\P\left[\rho(X_N, \X)\geqslant c_1\Phi^{-1}\left(\frac{c_2\log N-c_3\log\log N}N\right)\right]=1-o(1), \; \; N\to \infty.
\end{equation}

Consequently, there exist positive constants $c_1$ and $c_2$ such that
\begin{equation}\label{metricbelowexpect}
\E\rho(X_N, \X) \geqslant c_1 \Phi^{-1}\left(c_2 \frac{\log N}{N}\right).
\end{equation}
\end{theorem}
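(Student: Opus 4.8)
The plan is to establish the high-probability lower bound \eqref{metricbelowprob} by a second-moment (Paley--Zygmund type) argument applied to a carefully chosen family of ``test balls'' inside $\X_1$, and then to deduce \eqref{metricbelowexpect} from \eqref{metricbelowprob} together with the deterministic lower bound \eqref{lowerbound}. First I would fix a radius $r = r_N$ of the form $r_N = c_1\Phi^{-1}\!\left(\tfrac{c_2\log N - c_3\log\log N}{N}\right)$, so that by assumption (ii) and the strict doubling of $\Phi$ every ball $B(x, r_N)$ with center in $\X_1$ has $\mu$-mass comparable to $\Phi(r_N) \asymp \tfrac{c_2'\log N - c_3'\log\log N}{N}\,\mu(\X)$, where the exact constants $c_2, c_3$ will be tuned at the end. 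The event that $\rho(X_N,\X) \ge r_N$ fails exactly when $X_N$ meets every ball $B(y, r_N)$, $y \in \X$; so it suffices to produce, with probability $1-o(1)$, at least one point $y \in \X_1$ whose ball $B(y, r_N/2)$ (say) contains no point of $X_N$, since then the point $y$ itself is at distance $\ge r_N/2$ from $X_N$.

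The core step is to choose a maximal $r_N$-separated subset $\{y_1, \dots, y_M\} \subset \X_1$; by the doubling/regularity hypothesis (ii), $M \asymp \mu(\X_1)/\Phi(r_N) \asymp N/(\log N)$ up to constants. Let $Z_i$ be the indicator that $B(y_i, \tfrac{r_N}{4})$ is empty of $X_N$-points, and let $Z = \sum_i Z_i$. Because a single random point lands in $B(y_i,\tfrac{r_N}{4})$ with probability $\mu(B(y_i,\tfrac{r_N}{4}))/\mu(\X) \asymp \Phi(r_N)/\mu(\X)$ (again using doubling to pass between radii $r_N$ and $r_N/4$), we get
\[
\E Z_i = \left(1 - \frac{\mu(B(y_i, r_N/4))}{\mu(\X)}\right)^{\!N} \ge \exp\!\left(-C N \frac{\Phi(r_N)}{\mu(\X)}\right) \ge N^{-\beta}(\log N)^{\gamma}
\]
for suitable $\beta < 1$ once $c_2$ is taken small enough, so $\E Z \gtrsim M \cdot N^{-\beta}(\log N)^{\gamma} \to \infty$. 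For the variance, when $y_i, y_j$ are $r_N$-separated the balls $B(y_i, r_N/4)$, $B(y_j, r_N/4)$ are disjoint, and a short computation with the multinomial distribution of the $N$ points gives $\E[Z_iZ_j] = \bigl(1 - \tfrac{\mu(B_i) + \mu(B_j)}{\mu(\X)}\bigr)^N \le \E Z_i \,\E Z_j$ (negative correlation), whence $\mathrm{Var}(Z) \le \E Z$. Chebyshev then yields $\P[Z = 0] \le \mathrm{Var}(Z)/(\E Z)^2 \le 1/\E Z = o(1)$, giving \eqref{metricbelowprob}.

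Finally, to get \eqref{metricbelowexpect} I would write $\E\rho(X_N,\X) \ge r_N \cdot \P[\rho(X_N,\X) \ge r_N] \ge \tfrac12 r_N$ for large $N$; since $\Phi$ is strictly increasing and doubling, $r_N = c_1\Phi^{-1}\!\left(\tfrac{c_2\log N - c_3\log\log N}{N}\right) \ge c_1'\Phi^{-1}\!\left(c_2'\tfrac{\log N}{N}\right)$ once $N$ is large (and for the finitely many small $N$ one absorbs the loss into the constant using the deterministic bound \eqref{lowerbound} and the fact that $\Phi(r) \le r^{\sigma}$-type behavior forces $\Phi^{-1}(t) \gtrsim t^{1/\sigma}$... more simply, $\E\rho \ge c/N^{1/d}$ always). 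The main obstacle I anticipate is purely bookkeeping: making the negative-correlation estimate for $\E[Z_iZ_j]$ fully rigorous requires care with the joint law of the $N$ i.i.d.\ points, and tracking how the two doubling constants $C_1, C_2$ propagate through the passages between radii $r_N$, $r_N/2$, $r_N/4$ so that the final exponent in $\E Z_i = N^{-\beta+o(1)}$ genuinely satisfies $\beta < 1$; this is where the choice of $c_2$ (small) and $c_3$ (large enough to kill the $\log\log$ correction) gets pinned down, but no deep new idea is needed beyond the second-moment method.
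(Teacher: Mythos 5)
Your argument is correct, but it is genuinely different from the paper's. The paper computes the probability that at least one of $m$ disjoint test balls is missed \emph{exactly} by inclusion--exclusion (the function $f(N,n,m)$ of Section \ref{auxproofs}), which forces it to first equalize the measures of the balls: it shrinks each $B(x,1/(3n))$ to a radius $c_x/(3n)$ with $c_x\geqslant c_0$ so that all test sets have the same mass $c_0t$, using a continuity-of-$\varphi$ argument driven by the strict doubling of $\Phi$; it then needs the rather delicate lower bound \eqref{estimateoff} on $f$ and the asymptotics of Lemma \ref{lemmaforf}(i). Your second-moment route avoids all of this: for disjoint balls one has the exact identity $\E[Z_iZ_j]=\left(1-p_i-p_j\right)^N\leqslant\left((1-p_i)(1-p_j)\right)^N=\E Z_i\,\E Z_j$, so $\operatorname{Var}(Z)\leqslant\E Z$ and $\P[Z=0]\leqslant 1/\E Z$, and you only need an \emph{upper} bound $\mu(B(y_i,r_N/4))\leqslant\Phi(r_N)$ on each mass together with the count $M\gtrsim\mu(\X_1)/\Phi(r_N)$ from maximality plus hypothesis (ii) --- no equal-measure normalization is required. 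Since $\E Z\gtrsim N^{1-\beta}(\log N)^{\gamma-1}\to\infty$ for $\beta<1$ (indeed even at $\beta=1$ once the $\log\log N$ correction is inserted, which is what the paper's choice corresponds to), this yields \eqref{metricbelowprob} and hence \eqref{metricbelowexpect}. What the paper's heavier machinery buys is reuse: the same function $f$ and Lemma \ref{lemmaforf} are invoked verbatim in Section \ref{sectionfrombelow} to get the sharp constants of Theorem \ref{manifolds}; but your bound $\P[Z\geqslant1]\geqslant1-1/\E Z$ would serve there as well.

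One small caveat: to cover the finitely many small $N$ in \eqref{metricbelowexpect} you appeal to \eqref{lowerbound}, which assumes finite Hausdorff dimension --- a hypothesis not present in Theorem \ref{metricbelow}. This detour is unnecessary: hypothesis (ii) forces $\X_1$ to be infinite (since $\mu(B(x,r))\leqslant\Phi(r)\to0$), so $\E\rho(X_N,\X)>0$ for every fixed $N$, and the finitely many exceptional $N$ are absorbed by shrinking $c_1$.
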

Combining Theorems \ref{metricabove} and \ref{metricbelow} we deduce the following.
\begin{corollary}\label{twosided}
Assume the function $\Phi$ is continuous non-negative, strictly increasing, strictly doubling, and that there exist positive numbers $r_0$ and $\sigma$ such that $\Phi(r)\leqslant r^\sigma$ for every $r<r_0$. If for some positive constants $c, C$, any $x\in \X$ and every $r<r_0$ we have
\begin{equation}\label{regularity}
c\Phi(r)\leqslant \mu(B(x, r))\leqslant C\Phi(r),
\end{equation}
then there exist positive constants $c_1, c_2, c_3, c_4$ such that for any $\ep>0$ there is a number $N(\ep)$ such that for any $N>N(\ep)$ we have
\begin{equation}\label{distribineq}
\P\left[c_1\Phi^{-1}\left(c_2\frac{\log N}{N}\right ) \leqslant \rho(X_N, \X)\leqslant c_3\Phi^{-1}\left(c_4\frac{\log N}{N}\right)\right] > 1-\ep.
\end{equation}
Moreover, there exist positive constants $C_1, C_2, C_3, C_4$ such that
\begin{equation}\label{expectineq}
C_1\Phi^{-1}\left(C_2\frac{\log N}{N}\right) \leqslant \E\rho(X_N, \X)\leqslant C_3\Phi^{-1}\left(C_4\frac{\log N}{N}\right).
\end{equation}
\end{corollary}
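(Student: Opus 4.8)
The plan is to derive Corollary~\ref{twosided} directly from Theorems~\ref{metricabove} and~\ref{metricbelow}; the only real work is to absorb the multiplicative constants $c,C$ of the regularity hypothesis \eqref{regularity} and the $\log\log N$ term appearing in \eqref{metricbelowprob} into the cleaner form of the asserted estimates. Observe first that $\Phi(r)\leqslant r^{\sigma}$ already forces $\Phi(r)\to 0$ as $r\to 0^{+}$, so that hypothesis of both theorems is automatic, and that for $N$ large the quantities $c\,\tfrac{\log N}{N}$ lie in the range of $\Phi$, so every inverse below is well defined.

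For the upper bounds I would apply Theorem~\ref{metricabove} not to $\Phi$ but to $\widetilde\Phi:=c\Phi$. This $\widetilde\Phi$ is still continuous, non-negative, strictly increasing, and vanishing at $0^{+}$; the hypothesis $\mu(B(x,r))\geqslant c\Phi(r)=\widetilde\Phi(r)$ holds for $r<r_{0}$; and $\widetilde\Phi(r)=c\Phi(r)\leqslant c r^{\sigma}\leqslant r^{\sigma/2}$ for all sufficiently small $r$. Hence \eqref{metricaboveexpect} applies, and using $\widetilde\Phi^{-1}(y)=\Phi^{-1}(y/c)$ it gives $\E\rho(X_N,\X)\leqslant C_{3}\Phi^{-1}(C_{4}\tfrac{\log N}{N})$ with $C_{3},C_{4}$ depending only on $c$ and the constants of Theorem~\ref{metricabove}. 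Applying \eqref{metricaboveprob} to $\widetilde\Phi$ with a fixed $\a>\max\{\a_{0},2/c_{3}\}$ similarly produces an estimate of the form $\P[\rho(X_N,\X)\geqslant c_{3}\Phi^{-1}(c_{4}\tfrac{\log N}{N})]\leqslant c_{2}N^{-1}=o(1)$.

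For the lower bounds I would take $\X_{1}=\X$, which has $\mu(\X_{1})=\mu(\X)>0$, and apply Theorem~\ref{metricbelow} to $\widehat\Phi:=C\Phi$. The function $\widehat\Phi$ keeps the strict doubling property with the same constants, and \eqref{regularity} rewrites as $\tfrac{c}{C}\widehat\Phi(r)\leqslant\mu(B(x,r))\leqslant\widehat\Phi(r)$, so condition~(ii) of Theorem~\ref{metricbelow} holds with constant $c/C$. Then \eqref{metricbelowexpect} yields $\E\rho(X_N,\X)\geqslant C_{1}\Phi^{-1}(C_{2}\tfrac{\log N}{N})$, while \eqref{metricbelowprob} yields $\P[\rho(X_N,\X)\geqslant c_{1}\Phi^{-1}(\tfrac{c_{2}\log N-c_{3}\log\log N}{CN})]=1-o(1)$. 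Since $\log\log N=o(\log N)$, for any constant $c_{2}'<c_{2}/C$ we have $c_{2}'\tfrac{\log N}{N}\leqslant\tfrac{c_{2}\log N-c_{3}\log\log N}{CN}$ for all large $N$; as $\Phi^{-1}$ is increasing, the argument in the last probability may be replaced by $c_{2}'\tfrac{\log N}{N}$, which gives the left-hand inequality of \eqref{distribineq}.

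It remains to glue the pieces together. The two expectation estimates above are precisely the two halves of \eqref{expectineq}. For \eqref{distribineq}, a union bound applied to the complement of the event in question shows it fails with probability at most $o(1)+o(1)=o(1)$, hence the event holds with probability exceeding $1-\ep$ as soon as $N>N(\ep)$. I do not expect a genuine obstacle: the argument is pure bookkeeping, and the only points demanding care are verifying that each rescaled function still meets the precise hypotheses of the theorem it is fed into — in particular the growth bound $\widetilde\Phi(r)\leqslant r^{\sigma/2}$ and the preservation of the doubling constants — together with the routine check that $\tfrac{\log N}{N}$ eventually lands in the domain of $\Phi^{-1}$.
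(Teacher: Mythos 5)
Your proposal is correct and follows exactly the route the paper intends: the paper gives no separate argument for Corollary \ref{twosided} beyond the remark that it is obtained by combining Theorems \ref{metricabove} and \ref{metricbelow}, and your rescalings $\widetilde\Phi=c\Phi$, $\widehat\Phi=C\Phi$, the absorption of the $\log\log N$ term via monotonicity of $\Phi^{-1}$, and the final union bound supply precisely the missing bookkeeping.
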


For recent estimates similar to \eqref{distribineq} and \eqref{expectineq} for the spherical cap discrepancy of random points on the unit sphere $\mathbb{S}^2\subset \R^3$, see Theorems $9$ and $10$ in \cite{ABD}.

An important class of sets in $\R^d$ to which Corollary \ref{twosided} applies are described in the following definition.
\begin{defin}
We call a set $\X\subset \R^d$ {\it $s$-regular} if the condition \eqref{regularity} holds for $\mu=\H_s$ and $\Phi(r)=r^s$; i.e., for some positive constants $r_0, c$, and $C$ there holds
\begin{equation}\label{sregset}
cr^s\leqslant \H_s(B_d(x,r)\cap \X)\leqslant Cr^s \; \; \mbox{for any $x\in \X$ and every $r<r_0$}.
\end{equation}
\end{defin}
\begin{zamech}
Examples of sets in Euclidean space for which Corollary \ref{twosided} holds include a cube $[0,1]^d$, a rectifiable curve $\Gamma\subset \R^d$, the unit sphere $\mathbb{S}^{d-1}\subset \R^d$, or any $s$-regular set $\X\subset \R^d$. Furthermore, the results of the Corollary \ref{twosided} hold not only for $\Phi(r)=r^s$, but for more general regularity functions, such as $\Phi(r)=r^\a \log^\beta\left(1/r\right)$, with $\a>0$ and $\beta\geqslant 0$.

In particular, Corollary \ref{twosided} applies for the ``middle $1/3$'' Cantor set $\mathcal{C}$ in $[0,1]$ with $\textup{d}\mu=\mathbbm{1}_{\mathcal{C}}\textup{d}\H_{\log2/\log3}$. We remark that for $\mu$-a.e. point $x\in \mathcal{C}$ we have
$$
\liminf_{r\to 0^+}\frac{\mu(B_1(x,r)\cap \mathcal{C})}{r^{\log2/\log3}}\not=\limsup_{r\to 0^+}\frac{\mu(B_1(x,r)\cap \mathcal{C})}{r^{\log2/\log3}};
$$
i.e., at $\mu$-a.e. point $x$ of $\mathcal{C}$ the density of $\mu$ at $x$ does not exist, which essentially precludes obtaining
 a sharp asymptotic for $\E\rho(X_N, \mathcal{C})$ (compare with \eqref{uniform} below).  However, Corollary \ref{twosided} provides the two-sided estimate
$$
c_1 \left(\frac{\log N}{N}\right)^{\log 3/\log 2} \leqslant \E\rho(X_N, \mathcal{C}) \leqslant c_2 \left(\frac{\log N}{N}\right)^{\log 3/\log 2}.
$$
\end{zamech}
\begin{zamech}
The condition in Theorem \ref{metricabove} that $\mu(B(x,r))\geqslant \Phi(r)$ for every $x\in \X$ is essential. Indeed, if we consider the set $\X=[0,1]\cup \{2\}$ with $\mu$ Lebesgue measure, then $\mu(B_1(x,r))\geqslant r$ for $x\in \X\setminus \{2\}$. However, we have $\P\left[\rho(X_N, \X)\geqslant 1\right]=1$, and so $\E\rho(X_N, \X)\geqslant 1$. The reason that inequality \eqref{metricaboveexpect} fails in this case is that for the point $x=2$ we have $\mu(B_1(x,r))=0$ for small values of $r$. However, Theorem \ref{metricabove} does apply if $\mu=m_{[0,1]}+\alpha\delta_2$, where $m_{[0,1]}$ is Lebesgue measure on $[0,1]$, $\delta_2$ is the unit point mass at $x=2$, and $\alpha>0$. In this case we get
$$
\E\rho(X_N, \X)\leqslant {C(\alpha)}\cdot \frac{\log N}N.
$$
In fact, repeating the proofs from Sections \ref{abovvvve} and \ref{sectionfrombelow} (with $K_1=[0,1]$), we obtain
$$
\lim_{N\to \infty} \E\rho(X_N, \X) \cdot \frac{N}{\log N} = \frac{1+\a}{2} \; \; \mbox{for any $\a>0$}.
$$
\end{zamech}

The above results have immediate consequences for $\ep$-nets. Since different definitions of an ``$\ep$-net'' occur in the literature, the terminologies that we use are made precise in what follows.
\begin{defin}
A subset $A$ of a metric space $(\X, m)$ is called an {\it $\ep$-net} (or {\it $\ep$-covering}) if, for any point $y\in \X$, there exists a point $x\in A$ such that $m(x,y)\leqslant \ep$. Equivalently, $A$ is an $\ep$-net if $\rho(A, \X)\leqslant \ep$.
\end{defin}
\begin{defin}
A subset $A$ of a metric space $(\X, m)$ with a positive Borel measure $\mu$ is called a {\it measure $\ep$-net} if any ball $B(y,r)$ with $\mu(B(y,r))\geqslant \ep$ intersects $A$.
\end{defin}
We remind the reader that on $\mathbb{S}^d$ with $\mu$ surface area measure $\H_d$, the minimal $\ep$-net has cardinality $c\ep^{-d}$ (for the proof see, for example, Lemma $5.2$ in \cite{V}), while the minimal measure $\ep$-net has cardinality $c\ep^{-1}$.
\begin{corollary}\label{nets}
If $\Phi$ and $\mu$ are as in the first part of Theorem \ref{metricabove}, then there exists a positive constant $c_1$ such that for any number $\a$ there is a positive constant $C_\a$ for which
$$
\P\left[X_N \; \mbox{is an $\ep$-net}\right] \geqslant 1-N^{-\a}, \; \; \; \mbox{for} \; \ep=c_1\Phi^{-1}\left(C_\a\frac{\log N}N\right).
$$

Furthermore, if the function $\Phi$ is doubling, and the measure $\mu$ satisfies the condition \eqref{regularity}, then for any positive number $\a$ there exists a positive constant $C_\a$ such that
$$
\P\left[X_N \; \mbox{is a measure $\ep$-net}\right] \geqslant 1-N^{-\a}, \; \; \; \mbox{for} \; \ep=C_\a\frac{\log N}N.
$$
\end{corollary}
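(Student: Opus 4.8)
The plan is to prove the two assertions in turn, obtaining the first directly from the tail bound \eqref{metricaboveprob} and then deducing the second from the first by converting between the notions of ``$\ep$-net'' and ``measure $\ep$-net'' using the regularity and doubling hypotheses.

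\emph{First assertion.} Since $X_N$ is an $\ep$-net exactly when $\rho(X_N,\X)\le\ep$, the claim is equivalent to $\P[\rho(X_N,\X)>\ep]\le N^{-\a}$. I would invoke \eqref{metricaboveprob} with its internal multiplier (the ``$\a$'' there, call it $\beta$) chosen large in terms of $\a$ and the fixed constants $c_2,c_3,\alpha_0$: any $\beta>\max\{\alpha_0,(1+\a)/c_3\}$ makes $c_2N^{1-c_3\beta}\le N^{-\a}$ for all large $N$. Setting $C_\a:=\beta$ and $\ep:=c_1\Phi^{-1}(\beta\log N/N)$ with $c_1$ the constant of Theorem \ref{metricabove}, inequality \eqref{metricaboveprob} gives $\P[X_N\text{ is not an }\ep\text{-net}]=\P[\rho(X_N,\X)>\ep]\le c_2N^{1-c_3\beta}\le N^{-\a}$.

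\emph{Second assertion.} Here the strategy is to show that, under \eqref{regularity} and doubling, every $\ep'$-net with $\ep':=\Phi^{-1}(\ep/C)$ is automatically a measure $\ep$-net, and then to quote the first assertion. The two ingredients I would establish are: (i) if $\mu(B(y,r))\ge\ep$ and $r<r_0$, then $C\Phi(r)\ge\mu(B(y,r))\ge\ep$, so $r\ge\Phi^{-1}(\ep/C)=\ep'$; and if $r\ge r_0$ this is trivial once $\ep\le C\Phi(r_0)$ (true for large $N$); hence if $X_N$ is an $\ep'$-net the point of $X_N$ lying within $\ep'$ of $y$ belongs to $B(y,r)$, so $X_N$ meets every such ball. (ii) Since $\Phi$ is doubling, iterating $\Phi(2r)\le C_2\Phi(r)$ yields, for any fixed $A\ge1$, a constant $\kappa$ with $A\,\Phi^{-1}(u)\le\Phi^{-1}(\kappa u)$ for small $u$, which lets me absorb the multiplicative factor $c_1$ produced by the first assertion into the argument of $\Phi^{-1}$. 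Finally, since \eqref{regularity} gives in particular $\mu(B(x,r))\ge c\Phi(r)$ for all $x\in\X$, the first assertion applies with $\Phi$ replaced by $c\Phi$ and produces, with probability $\ge 1-N^{-\a}$, an $\ep_0$-net with $\ep_0=c_1\Phi^{-1}(\beta''\log N/N)$ for a suitable $\beta''=\beta''(\a)$; taking $\kappa$ as in (ii) with $A=c_1$ and $C_\a:=C\kappa\beta''$, one gets $\ep_0\le\Phi^{-1}(\kappa\beta''\log N/N)=\Phi^{-1}\bigl(C_\a\log N/(CN)\bigr)=\ep'$ for $\ep:=C_\a\log N/N$, so on that event $X_N$ is a measure $\ep$-net.

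\emph{Main obstacle.} None of this is deep, but the bookkeeping is the delicate point: one must ensure that the multiplicative constant $c_1$ emitted by Theorem \ref{metricabove} is swallowed by the doubling property in step (ii), and that in step (i) balls of non-small radius are not overlooked — which is exactly the reason the conclusion is stated for $\ep\sim\log N/N$ small (equivalently, for large $N$). Everything else reduces to a routine choice of constants.
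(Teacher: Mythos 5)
Your proposal is correct and coincides with the derivation the paper intends: the paper gives no separate proof of Corollary \ref{nets}, treating it as an immediate consequence of the tail bound \eqref{metricaboveprob} for the first assertion and of the conversion between metric balls and measure balls via \eqref{regularity} and doubling for the second, which is exactly what you carry out. The only (harmless) caveat is that your choice of $\beta$ yields the bound $c_2N^{1-c_3\beta}\le N^{-\a}$ only for $N$ large, but enlarging $\beta$ once more, or noting the statement is vacuous for bounded $N$, disposes of this.
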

By way of illustration, suppose for simplicity that $\Phi(r)=Cr^d$ for some positive constant $C$ and $\ep=\left[(\log N)/N\right]^{1/d}$, which implies that $N$ is of the order $\ep^{-d}\log(1/\ep)$. Then, from the first part of Corollary \ref{nets}, if we take $C_1\ep^{-d}\log\left(1/\ep\right)$ random points, we get an $\ep$-net ($\ep$-covering) with high probability. 

The cardinality of an $\ep$-covering of a set $K\subset \mathbb{S}^d$ plays an important role in ``1-bit compressed sensing''. The estimates for the number $m$ of random vectors $\{\theta_j\}_{j=1}^m$, essential to approximate an unknown signal $x\in K$ from knowledge of $m$ ``bits'' $\textup{sign}\langle x,\theta_j\rangle$ involve finding an $\ep$-covering of the set $K$ with $\log(N(K, \ep))\leqslant C\ep^{-2}w(K)$, where $N(K, \ep)$ is the cardinality of the covering, and $w$ is the so-called ``mean width'' of $K$. As can be seen from our results, for many sets $K$ a random set of $C\ep^{-d}\log(1/\ep)$ points satisfies this condition with high probability. For further discussion, see \cite{PV1}, \cite{PV2}.
\section{Expected Covering Radii for Subsets of Euclidean Space}\label{sectioneuclid}
In some cases we are able to ``glue'' upper and lower estimates together to obtain sharp asymptotic results.
For this purpose we state the following definitions.
\begin{defin}\label{flat}
Let $s$ be a positive integer, $s\leqslant d$. Suppose $K$ is a compact $s$-dimensional set in $\R^d$ with the Euclidean metric.

We call $K$ an {\it asymptotically flat $s$-regular} set if
for any $x\in K$ it holds that
\begin{equation}\label{uniform}
r^{-s}\mathcal{H}_s(B_d(x,r)\cap K)\rightrightarrows \upsilon_s \;\; \mbox{as $r\to 0^+$},
\end{equation}
where the convergence is uniform in $x$, and $\upsilon_s$ is the volume of the $s$-dimensional unit ball $B_s(0,1)$.

We call $K$ a {\it quasi-nice $s$-regular} set if
\begin{enumerate} [(i)]
\item $K$ is countably $s$-rectifiable; i.e., $K$ is of the form $\bigcup_{j=1}^\infty f_j(E_j) \cup G$, where $\H_s(G)=0$ and where each $f_j$ is a Lipschitz function from a bounded subset $E_j$ of $\R^s$ to $\R^d$;
\item There exist positive numbers $c, C, r_0$ such that for any $x\in K$ and any $r<r_0$ the $s$-regularity condition holds: $c r^s\leqslant \mathcal{H}_s(B_d(x,r)\cap K)\leqslant C r^s$;
\item There is a finite set $T\subset K$ such that for any $r<r_0$ and $y\in K\setminus \bigcup_{x_t\in T}B_d(x_t, r)$ it holds that $\mathcal{H}_s(B_d(y,r)\cap K)\geqslant \upsilon_s r^s$.
\end{enumerate}
\end{defin}
We remark that the appearance of the constant $\upsilon_s$ in the above definitions is quite natural. Indeed, if $K$ is a countably $s$-rectifiable compact set and $0<\H_s(K)<\infty$, then for $\mathcal{H}_s$-almost every point $x\in K$ the following holds: $r^{-s}\mathcal{H}_s(B_d(x,r)\cap K)\to \upsilon_s$ as $r\to 0^+$. For the details see the Theorem 17.6 in \cite{Mat} or Theorem 3.33 in \cite{Fal}. Thus, if any uniform limit in \eqref{uniform} exists, then it must equal $\upsilon_s$.

For asymptotically flat $s$-regular and quasi-nice $s$-regular sets we deduce the following precise asymptotics for the expected covering radius as well as its moments.

\begin{theorem}\label{manifolds}
Suppose $K\subset \R^d$ is an asymptotically flat $s$-regular or a quasi-nice $s$-regular set for integer $s\leqslant d$. Then for $X_N=\{x_1, \ldots, x_N\}$ a set of $N$ independently and randomly distributed points over $K$ with respect to the measure $\textup{d}\mu:=\mathbbm{1}_K\cdot \textup{d}\mathcal{H}_s / \mathcal{H}_s(K)$, and any $p \geq 1,$
\begin{equation}\label{asymp}
\lim_{N\to \infty} \E[\rho(X_N, K)^p]\cdot \left[\frac{N}{\log N}\right]^{p/s}=\left(\frac{\mathcal{H}_s(K)}{\upsilon_s}\right)^{p/s}.
\end{equation}
\end{theorem}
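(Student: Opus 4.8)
The plan is to work with the normalized scale $r_N:=\bigl(\tfrac{\H_s(K)}{\upsilon_s}\cdot\tfrac{\log N}{N}\bigr)^{1/s}$, for which the assertion \eqref{asymp} is equivalent to the single limit $\E\bigl[(\rho(X_N,K)/r_N)^p\bigr]\to1$. Since Corollary \ref{twosided} already yields $\rho(X_N,K)\asymp(\log N/N)^{1/s}$ both in probability and in mean, the real content is the sharp constant, and the point is that the covering argument behind Theorem \ref{metricabove} and the hole-finding argument behind Theorem \ref{metricbelow} can be rerun with constants attached to $\upsilon_s$ as sharp as one likes. The single ingredient both arguments consume is the sharp uniform local density estimate: for every $\eta>0$ there is $r_\eta>0$ with
\[
(\upsilon_s-\eta)\,\frac{u^s}{\H_s(K)}\ \le\ \mu\bigl(B_d(x,u)\bigr)\ \le\ (\upsilon_s+\eta)\,\frac{u^s}{\H_s(K)}\qquad(x\in K,\ 0<u<r_\eta),
\]
which for an asymptotically flat $s$-regular set is exactly \eqref{uniform}; the quasi-nice case is addressed at the end. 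I write $\rho:=\rho(X_N,K)$, $D:=\diam K$, recall $\H_s(K)\in(0,\infty)$, and use that by $s$-regularity a maximal $\delta$-separated subset of $K$ has between $c\delta^{-s}$ and $C\delta^{-s}$ points.

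For the upper bound, fix $\e\in(0,1)$ and choose $\eta=\eta(\e)>0$ so small that $\kappa:=\tfrac{\upsilon_s-\eta}{\upsilon_s}\ge\tfrac{1+\e/4}{1+\e/2}$. For large $N$ take a maximal $(\tfrac\e2 r_N)$-separated set $\{y_i\}$, so the balls $B_d(y_i,\tfrac\e2 r_N)$ cover $K$ and their number is $\le C(\e)\,N/\log N$. If $\rho>\lambda r_N$ with $\lambda\ge1+\e$, some $B_d(y,\lambda r_N)$, $y\in K$, contains no point of $X_N$, hence so does $B_d(y_i,(\lambda-\tfrac\e2)r_N)$ for the $y_i$ nearest $y$; since $\lambda-\tfrac\e2\ge1$ and $s\ge1$ we have $(\lambda-\tfrac\e2)^s\ge\lambda-\tfrac\e2$, so the lower density bound gives $\mu\bigl(B_d(y_i,(\lambda-\tfrac\e2)r_N)\bigr)\ge\kappa(\lambda-\tfrac\e2)\tfrac{\log N}{N}=:a(\lambda)\tfrac{\log N}{N}$, with $a$ affine increasing and $a(1+\e)\ge1+\tfrac\e4$. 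By $1-x\le e^{-x}$ and a union bound,
\[
\P[\rho>\lambda r_N]\ \le\ \frac{C(\e)}{\log N}\,N^{\,1-a(\lambda)}\qquad(1+\e\le\lambda<r_\eta/r_N),
\]
while for $\lambda\ge r_\eta/r_N$ one simply bounds $\P[\rho>\lambda r_N]\le\P[\rho>r_\eta/2]\le M_0e^{-\delta_0 N}$ with fixed $M_0,\delta_0>0$ (finitely many balls of radius $r_\eta/4$ together with $\inf_{x\in K}\mu(B_d(x,r_\eta/4))=\delta_0>0$). Writing $\E[(\rho/r_N)^p]=p\int_0^{D/r_N}\lambda^{p-1}\P[\rho>\lambda r_N]\,d\lambda$ and splitting the range at $1+\e$ and $r_\eta/r_N$: the first piece is $\le(1+\e)^p$; the last is $\le p(D/r_N)^pM_0e^{-\delta_0N}=o(1)$; and since $1-a(\lambda)\le-\tfrac\e4-\kappa(\lambda-1-\e)$, a routine estimate of $\int\lambda^{p-1}N^{-\kappa(\lambda-1-\e)}\,d\lambda$ shows the middle piece is $O_{\e,p}\!\bigl(N^{-\e/4}(\log N)^{-2}\bigr)$. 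Hence $\limsup_N\E[(\rho/r_N)^p]\le(1+\e)^p$ for every $\e>0$, so $\limsup_N\E[(\rho/r_N)^p]\le1$.

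For the lower bound, fix $\e\in(0,1)$ and choose $\eta=\eta(\e)>0$ with $\tfrac{\upsilon_s+\eta}{\upsilon_s}(1-\e)^s\le1-\tfrac\e2$ (possible since $(1-\e)^s\le1-\e$ for $s\ge1$). For large $N$ let $\{z_j\}_{j=1}^{m_N}$ be a maximal $3r_N$-separated subset of $K$; then the Euclidean balls $B_d(z_j,(1-\e)r_N)$ are pairwise disjoint and $m_N\ge c_0r_N^{-s}\asymp N/\log N$. Let $E_j$ be the indicator that $B_d(z_j,(1-\e)r_N)$ contains no point of $X_N$ and $S_N:=\sum_jE_j$. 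With $q_j:=\mu(B_d(z_j,(1-\e)r_N))\le(1-\tfrac\e2)\tfrac{\log N}{N}=:q_{\max}$ and $Nq_{\max}^2\to0$, the expansion $\log(1-x)=-x+O(x^2)$ gives $\E E_j=(1-q_j)^N\ge(1-q_{\max})^N=(1-o(1))N^{-(1-\e/2)}$, so
\[
\E S_N=\sum_j(1-q_j)^N\ \ge\ (1-o(1))\,m_N\,N^{-(1-\e/2)}\ \gtrsim\ \frac{N^{\e/2}}{\log N}\ \longrightarrow\ \infty.
\]
By disjointness, for $j\ne k$ one has $\E[E_jE_k]=(1-q_j-q_k)^N\le(1-q_j)^N(1-q_k)^N=\E E_j\,\E E_k$, so the $E_j$ are negatively correlated and $\mathrm{Var}(S_N)\le\E S_N$; Chebyshev then gives $\P[S_N=0]\le1/\E S_N\to0$. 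On $\{S_N\ge1\}$ some $B_d(z_j,(1-\e)r_N)$ avoids $X_N$, so $\rho\ge\dist(z_j,X_N)>(1-\e)r_N$ and therefore $\E[(\rho/r_N)^p]\ge(1-\e)^p\,\P[S_N\ge1]\to(1-\e)^p$. Letting $\e\to0$ and combining with the upper bound yields $\E[(\rho/r_N)^p]\to1$, which is \eqref{asymp}.

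The principal obstacle — and the reason for isolating these two classes of sets — is justifying the sharp uniform density estimate. For an asymptotically flat $s$-regular set it is literally \eqref{uniform}, and both halves above apply verbatim. For a quasi-nice $s$-regular set the two directions need different treatment. The lower density bound $\mu(B_d(x,u))\ge\upsilon_s u^s/\H_s(K)$ used in the covering argument is precisely property (iii) of Definition \ref{flat}, valid for $x$ outside $\bigcup_{x_t\in T}B_d(x_t,u)$; the witness points $y$ lying within $\lambda r_N$ of the finite set $T$ are handled separately, because each $B_d(x_t,\lambda r_N)\cap K$ is covered by $O_\e(1)$ balls of radius $\tfrac\e2 r_N$ and, by the weak regularity (ii), any such ball containing no point of $X_N$ has probability $\le N^{-c\lambda^s}$, making this exceptional contribution $o(1)$ in both the tail bound and the moment integral. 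The upper density bound $\mu(B_d(x,u))\le(\upsilon_s+\eta)u^s/\H_s(K)$ used in the lower bound is not among the hypotheses, but since $K$ is countably $s$-rectifiable with $0<\H_s(K)<\infty$ the density $u^{-s}\H_s(B_d(x,u)\cap K)\to\upsilon_s$ for $\H_s$-almost every $x\in K$ (Theorem 17.6 in \cite{Mat}); by Egorov's theorem this convergence is uniform on a subset $G\subset K$ with $\H_s(K\setminus G)<\H_s(K)/2$, and one takes the separated points $z_j$ inside $G$ — the count $m_N\gtrsim r_N^{-s}$ is unchanged because $\H_s(B_d(z_j,3r_N)\cap K)\le C(3r_N)^s$ by (ii). Beyond this, the proof is bookkeeping; the only genuinely delicate step is keeping the constants multiplying $\upsilon_s$ sharp while $\eta\downarrow0$ is coupled to $\e\downarrow0$, everything else being the first/second moment method together with $1-x\le e^{-x}$ and $\log(1-x)=-x+O(x^2)$.
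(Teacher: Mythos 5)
Your argument is correct and lands on the same constant, but it runs the lower bound on a genuinely different engine than the paper's. The upper half is essentially the paper's proof: a fine net, the sharp local lower density bound, a union bound, and a split of the moment integral — the only cosmetic differences being that you lose a fixed factor $\e$ in the net radius and integrate the tail continuously in $\lambda$, where the paper loses only a factor $\e_N=1/\log N$ and splits the integral at two explicit thresholds $n_1,n_2$; your treatment of the exceptional set $T$ in the quasi-nice case (weak regularity (ii) makes each bad ball empty with probability $N^{-c(\lambda-\e/2)^s}$, and there are $O((\lambda/\e)^s)$ of them — note the count is not $O_\e(1)$ uniformly in $\lambda$, though the superpolynomial decay in $\lambda$ absorbs this) matches the paper's Section 5.5. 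The lower half is where you diverge: the paper computes $\P(\exists\,\text{empty ball})$ exactly by inclusion--exclusion (the function $f(N,n,m)$ of Section 4), which forces all balls to have \emph{exactly} equal measure — hence the equalization step producing radii $c_x$ — and then needs two technical lemmas to show $f\to 1$ at a threshold sharp to $\log\log N$ precision. You replace all of this by the second-moment method: disjointness makes the emptiness indicators negatively correlated, so $\mathrm{Var}(S_N)\le \E S_N$ and $\P[S_N=0]\le 1/\E S_N\to 0$ once $\E S_N\gtrsim N^{\e/2}/\log N\to\infty$. This requires only an upper bound on each ball's measure (no equalization), and is shorter; what it gives up is the $\log N-\a\log\log N$ precision of the paper's threshold, which is irrelevant for \eqref{asymp} because you carry an $\e$ of slack that is sent to $0$ at the end. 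The Egorov reduction for quasi-nice sets (uniform density along the countable family of radii actually used, on a subset of measure $>\H_s(K)/2$, with the separation count preserved via regularity (ii)) is identical to the paper's, as is the direct use of \eqref{uniform} in the asymptotically flat case.
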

Important examples of asymptotically flat $s$-regular sets are given in the following result, which includes the verification of the conjecture of Brauchart et al. in \cite{BDSSWW} for the expected covering radius of randomly distributed points on the unit sphere.  
\begin{corollary}\label{corsphere}
Suppose $K$ is a closed $C^{(1,1)}$ $s$-dimensional embedded submanifold of $\R^d$; i.e., $0<\mathcal{H}_s(K)<\infty$ and, for any embedding $\vf$, all its first partial derivatives exist and are uniformly Lipschitz. Then $K$ is an asymptotically flat $s$-regular manifold, and thus for $N$ points independently and randomly distributed over $K$ with respect to $\textup{d}\mu=\mathbbm{1}_K \cdot \textup{d}\H_s/\H_s(K)$, equation \eqref{asymp} holds.

In particular, if $K=\mathbb{S}^d$ is a unit sphere in $\R^{d+1}$ and $p \geq 1$, then
\begin{equation}\label{sphereequiv}
\lim_{N\to\infty} \E[\rho(X_N, \mathbb{S}^d)^p]\cdot \left[\frac{N}{\log N }\right]^{p/d}=\left(\frac{(d+1)\upsilon_{d+1}}{\upsilon_d}\right)^{p/d}=\left(2\sqrt{\pi}\frac{\Gamma(\frac{d+2}2)}{\Gamma(\frac{d+1}2)}\right)^{p/d}	. %
\end{equation}
Thus \eqref{sphereequiv1} holds.
\end{corollary}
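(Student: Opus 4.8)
The plan is to reduce everything to Theorem \ref{manifolds} by showing that a closed $C^{(1,1)}$ embedded submanifold $K\subset\R^d$ is \emph{asymptotically flat $s$-regular}, i.e. that it satisfies \eqref{uniform}. The geometric content is purely local: near each point $x\in K$, after a rigid motion, $K$ coincides with the graph of a $C^{(1,1)}$ function $g\colon U\subset\R^s\to\R^{d-s}$ with $g(0)=0$ and $Dg(0)=0$, and the $C^{(1,1)}$ hypothesis gives $|Dg(u)|\leqslant Lu$ and $|g(u)|\leqslant \tfrac{L}{2}|u|^2$ on a neighborhood, with the \emph{same} constant $L$ for every $x\in K$ by compactness (this uses a finite atlas together with a Lebesgue-number argument). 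Consequently, for small $r$, the portion of $K$ inside $B_d(x,r)$ is a graph over a region in the tangent plane that is squeezed between the discs $B_s(0, r(1-Cr^2))$ and $B_s(0,r)$, and the area element $\sqrt{\det(I+Dg^\top Dg)}$ differs from $1$ by at most $C r^2$ on that region. The first step, then, is to make this sandwich precise and conclude
\[
\bigl| r^{-s}\mathcal{H}_s\!\left(B_d(x,r)\cap K\right) - \upsilon_s \bigr| \leqslant C r,
\]
with $C$ independent of $x$; letting $r\to 0^+$ gives the uniform convergence \eqref{uniform}.

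The second step is bookkeeping: verify that such a $K$ is compact with $0<\mathcal{H}_s(K)<\infty$ (immediate, since $K$ is a compact embedded submanifold covered by finitely many bi-Lipschitz charts), so that Theorem \ref{manifolds} applies and yields \eqref{asymp} with $\upsilon_s$ in the denominator. For the sphere specialization, set $s=d$ and $K=\mathbb{S}^d\subset\R^{d+1}$; it is certainly a closed $C^\infty$ (hence $C^{(1,1)}$) embedded submanifold, so \eqref{asymp} applies with $\mathcal{H}_d(\mathbb{S}^d)$ in place of $\mathcal{H}_s(K)$. It then remains only to compute $\mathcal{H}_d(\mathbb{S}^d)/\upsilon_d$. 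Using the normalization $\mathcal{H}_d([0,1]^d)=1$ stated in the notation section, one has $\mathcal{H}_d(\mathbb{S}^d)=\frac{\pi^{d/2}}{2^d\Gamma(1+d/2)}\,\mathcal{H}^d(\mathbb{S}^d)$ where $\mathcal{H}^d(\mathbb{S}^d)=\frac{2\pi^{(d+1)/2}}{\Gamma((d+1)/2)}$ is the classical surface area; combining these with $\upsilon_d=\frac{\pi^{d/2}}{\Gamma(1+d/2)}$ collapses the powers of $\pi$ and $\Gamma$-factors to give $\mathcal{H}_d(\mathbb{S}^d)/\upsilon_d = (d+1)\upsilon_{d+1}/\upsilon_d = 2\sqrt{\pi}\,\Gamma(\tfrac{d+2}{2})/\Gamma(\tfrac{d+1}{2})$, which is exactly the constant in \eqref{sphereequiv}. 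Since \eqref{sphereequiv} with $p=1$ is precisely the conjectured limit \eqref{sphereequiv1}, this also establishes that claim.

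The main obstacle is the \emph{uniformity} in the first step. Pointwise, the estimate $r^{-s}\mathcal{H}_s(B_d(x,r)\cap K)\to\upsilon_s$ for a $C^1$ submanifold is classical (and follows from Theorem 17.6 in \cite{Mat} as already noted in the text), but that argument is not uniform in $x$ and does not even need $C^{(1,1)}$. The $C^{(1,1)}$ hypothesis is exactly what buys a \emph{quadratic} error term with a uniform constant: one must argue that the second-order Taylor remainder of the graphing functions is controlled by a single Lipschitz constant for $Dg$ across all of $K$, which requires passing from local charts to a global bound via compactness and checking that the rigid motions bringing $K$ into graph form can be chosen to vary controllably (or, more cleanly, observing that the needed quantity — the Hausdorff measure of $B_d(x,r)\cap K$ — is invariant under rigid motions, so only the local constant $L$ at each chart matters, and the maximum over a finite atlas is finite). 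Once this uniform quadratic bound is in hand, the area-formula computation sandwiching $\mathcal{H}_s(B_d(x,r)\cap K)$ between $\upsilon_s r^s(1-Cr)$ and $\upsilon_s r^s(1+Cr)$ is routine.
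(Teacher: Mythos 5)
Your proposal is correct and follows the same route as the paper: reduce to Theorem \ref{manifolds} by verifying that a closed $C^{(1,1)}$ submanifold is asymptotically flat $s$-regular, then compute $\mathcal{H}_d(\mathbb{S}^d)/\upsilon_d=(d+1)\upsilon_{d+1}/\upsilon_d$. The only difference is that the paper simply cites \cite{BurIv} for the asymptotic-flatness claim, whereas you actually supply the graph-chart argument with the uniform quadratic error term coming from the Lipschitz bound on $Dg$ --- which is the substantive content and is carried out correctly. One small bookkeeping slip: with the paper's conventions, the diameter-based measure satisfies $\mathcal{H}^d(\mathbb{S}^d)=\frac{2^d}{\upsilon_d}(d+1)\upsilon_{d+1}$ rather than equalling the classical surface area, so your intermediate identification of $\mathcal{H}^d(\mathbb{S}^d)$ is off by the factor $2^d/\upsilon_d$; the cleaner (and correct) route, which you implicitly use, is that $\mathcal{H}_d$ normalized by $\mathcal{H}_d([0,1]^d)=1$ \emph{is} the classical surface measure, so $\mathcal{H}_d(\mathbb{S}^d)=(d+1)\upsilon_{d+1}$ directly and the final constant in \eqref{sphereequiv} is as stated.
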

As a consequence of the corollary, we shall deduce in Section 5 the result of Maehara mentioned in the Introduction.
\begin{corollary}[Maehara \cite{M}]\label{maehara}
Suppose $X_N=\{x_1, \ldots, x_N\}$ is a set of $N$ points, independently and randomly distributed over the unit sphere $\mathbb{S}^d$ with respect to $\textup{d}\mu=\mathbbm{1}_{\mathbb{S}^d} \cdot \textup{d}\H_d/\mathcal{H}_s(\mathbb{S}^d))$ and set
$$
Z_N:=\rho(X_N, \mathbb{S}^d)\cdot \left(\frac{\upsilon_d}{(d+1)\upsilon_{d+1}}\cdot \frac{N}{\log N}\right)^{1/d}.
$$
Then $Z_N$ converges in probability to 1 as $N \to \infty$; i.e., for each $\epsilon > 0,$
\begin{equation}\label{Mae}
\lim_{N \to \infty} \mathbb{P}(|Z_N-1|\geq\epsilon)=0.
\end{equation}
\end{corollary}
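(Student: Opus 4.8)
The plan is to derive Corollary \ref{maehara} directly from the moment asymptotics already established in Corollary \ref{corsphere}. Note that \eqref{sphereequiv} with $K = \mathbb{S}^d$ and $p$ running over the integers $\{1, 2\}$ gives precisely the asymptotics of the first two moments of $Z_N$: namely $\E Z_N \to 1$ and $\E Z_N^2 \to 1$ as $N \to \infty$. Therefore $\operatorname{Var}(Z_N) = \E Z_N^2 - (\E Z_N)^2 \to 1 - 1 = 0$. Convergence in probability to the constant $1$ then follows immediately from Chebyshev's inequality: for any $\epsilon > 0$,
\begin{equation*}
\mathbb{P}\bigl(|Z_N - 1| \geq \epsilon\bigr) \leq \mathbb{P}\bigl(|Z_N - \E Z_N| \geq \epsilon/2\bigr) + \mathbbm{1}\bigl[|\E Z_N - 1| \geq \epsilon/2\bigr] \leq \frac{4\operatorname{Var}(Z_N)}{\epsilon^2} + \mathbbm{1}\bigl[|\E Z_N - 1| \geq \epsilon/2\bigr],
\end{equation*}
and both terms on the right tend to $0$ as $N \to \infty$. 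This proves \eqref{Mae}.

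Alternatively, and even more cleanly, one can avoid second moments entirely by invoking the two-sided probabilistic estimate. Since $\mathbb{S}^d$ is an asymptotically flat $s$-regular set (hence in particular $s$-regular in the sense of Corollary \ref{twosided} with $\Phi(r) = r^d$), the distributional inequality \eqref{distribineq} already localizes $\rho(X_N, \mathbb{S}^d)$ to an interval of the form $[c_1 (\log N / N)^{1/d}, c_3(\log N/N)^{1/d}]$ with probability $1 - o(1)$; combined with the convergence $\E Z_N \to 1$ from Corollary \ref{corsphere} (the $p = 1$ case) and the fact that the rescaled quantities $Z_N$ are bounded in this regime, uniform integrability yields convergence in probability. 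But the variance argument above is the most economical, so I would present that one.

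The one point requiring a sentence of care is the normalization: the scaling factor in the definition of $Z_N$ is exactly $\left[\upsilon_d / ((d+1)\upsilon_{d+1}) \cdot N/\log N\right]^{1/d}$, which is the reciprocal of the $d$-th root of the constant $(d+1)\upsilon_{d+1}/\upsilon_d$ appearing on the right-hand side of \eqref{sphereequiv}. Hence $\E[Z_N^p] = \E[\rho(X_N,\mathbb{S}^d)^p] \cdot \left[\upsilon_d/((d+1)\upsilon_{d+1}) \cdot N/\log N\right]^{p/d} \to 1$ for every $p \geq 1$ by \eqref{sphereequiv}, and in particular for $p = 1, 2$, which is all that is needed.

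There is essentially no obstacle here: the whole content of Corollary \ref{maehara} is contained in the $p=1,2$ instances of Corollary \ref{corsphere}, and the deduction is the standard "$L^2$ convergence to a constant implies convergence in probability" argument. The only thing to verify is that Corollary \ref{corsphere} is indeed applicable — i.e. that $\mathbb{S}^d$ is a closed $C^{(1,1)}$ embedded submanifold of $\R^{d+1}$ with $0 < \H_d(\mathbb{S}^d) < \infty$ — which is immediate.
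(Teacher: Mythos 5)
Your proposal is correct and is essentially identical to the paper's own argument: the paper likewise takes the $p=1$ and $p=2$ cases of Corollary \ref{corsphere} to get $\E Z_N \to 1$ and $\E[Z_N^2]\to 1$, concludes $\E[(Z_N-1)^2]\to 0$, and finishes with Chebyshev's inequality. Your variance-plus-bias splitting is just a cosmetic rearrangement of the same second-moment computation, so there is nothing to add.
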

\begin{zamech}
We remark that our results for $\mathbb{S}^d$ do not directly follow from \eqref{Mae}. 
Maehara's result implies that the bounded sequence
$$
p_N(t):=\P(Z_N\geqslant t)\to \mathbbm{1}_{[0,1]}(t) \; \; \mbox{for a.e. $t>0$};
$$
however, since the range of $t$ is $[0, \infty)$, the constant function $1$ is not integrable, and we cannot apply the Lebesgue dominated convergence theorem to get $\E Z_N = \int_0^{\infty} p_N(t)dt \to 1$.


\end{zamech}

The next corollary gives an example of a quasi-nice $1$-regular set.
\begin{corollary}\label{corcurve}
Suppose $\gamma$ is a rectifiable curve in $\R^d$(i.e., $0<\mathcal{H}_1(\gamma)<\infty$ and $\gamma$ is a continuous injection of a closed interval of $\R$). If $X_N$ denotes a set of $N$ points independently and randomly distributed over $\gamma$ with respect to $\textup{d}\mu:=\mathbbm{1}_\gamma\cdot \textup{d}\mathcal{H}_1/\mathcal{H}_1(\gamma)$, then $\gamma$ is a quasi-nice $1$-regular set, and for any $p\geqslant 1$
\begin{equation}\label{curveequiv}
\lim_{N\to \infty} \E[\rho(X_N, \gamma)^p]\cdot \left[\frac{N}{\log N}\right]^p=\left(\frac{\mathcal{H}_1(\gamma)}{2}\right)^p.
\end{equation}
\end{corollary}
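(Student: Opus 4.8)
\textbf{Proof proposal for Corollary \ref{corcurve}.}

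The plan is to derive \eqref{curveequiv} by verifying that a rectifiable curve $\gamma$ is a quasi-nice $1$-regular set in the sense of Definition \ref{flat}, and then to invoke Theorem \ref{manifolds} with $s=1$, recalling that $\upsilon_1 = 2$ (the length of $[-1,1]$), which immediately gives the right-hand side $(\mathcal{H}_1(\gamma)/\upsilon_1)^p = (\mathcal{H}_1(\gamma)/2)^p$. So the real work is checking conditions (i), (ii), (iii) of the definition of ``quasi-nice $1$-regular''. Condition (i) is essentially free: a rectifiable curve is the image of a continuous injection of a closed interval with finite $\mathcal{H}_1$-measure, hence it admits an arclength parametrization $\psi\colon [0,L] \to \R^d$ (with $L = \mathcal{H}_1(\gamma)$) that is $1$-Lipschitz, so $\gamma = \psi([0,L])$ exhibits $\gamma$ as a single Lipschitz image and thus as a countably $1$-rectifiable set with no exceptional null part needed.

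For condition (ii), I would work with the arclength parametrization $\psi$. The upper bound $\mathcal{H}_1(B_d(x,r)\cap\gamma) \leq C r$ follows because if $x = \psi(t_0)$ then $\{t : |\psi(t)-\psi(t_0)|\leq r\}$ could in principle be a complicated set, but since $\psi$ is arclength-parametrized, any subinterval of length $\ell$ maps onto an arc of $\mathcal{H}_1$-measure exactly $\ell$; the preimage of $B_d(x,r)\cap\gamma$ is closed, and one can bound its total length by a constant multiple of $r$ by a chaining/covering argument along $[0,L]$ — the curve can re-enter the ball many times, but each ``visit'' that contributes length more than, say, $2r$ must have its endpoints on the sphere of radius $r$, and a counting argument bounds the number of such visits in terms of $L/r$... actually more carefully, one needs compactness of $\gamma$ and the injectivity of $\psi$ to get a uniform constant $C$ and a uniform $r_0$; this is where one should be a little careful. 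The lower bound $\mathcal{H}_1(B_d(x,r)\cap\gamma) \geq c r$ is easier: for $r$ small relative to $\diam(\gamma)$, starting from $x=\psi(t_0)$ and moving along the parameter, the sub-arc travels at unit speed in arclength but its Euclidean displacement is at most its arclength, so the arc $\psi([t_0, t_0+r])$ (or $\psi([t_0-r,t_0])$ if $t_0$ is near the right endpoint) lies entirely in $B_d(x,r)$ and has measure $r$; hence $c = 1$ works, with $r_0 = \mathcal{H}_1(\gamma)$ or so. Thus $\gamma$ is $1$-regular.

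For condition (iii), the point is that away from the two endpoints $\psi(0)$ and $\psi(L)$ (so take the finite set $T = \{\psi(0), \psi(L)\}$), a point $y=\psi(t_0)$ with $t_0$ at parameter-distance at least $r$ from both $0$ and $L$ has \emph{both} sub-arcs $\psi([t_0-r, t_0])$ and $\psi([t_0, t_0+r])$ available, but we actually only need a lower bound of $\upsilon_1 r = 2r$ on $\mathcal{H}_1(B_d(y,r)\cap\gamma)$, and this is exactly what these two disjoint sub-arcs provide, since each has arclength $r$ and each lies in $B_d(y,r)$ by the unit-speed/displacement bound, and they share only the point $y$. The hypothesis ``$y \in \gamma \setminus \bigcup_{x_t\in T} B_d(x_t,r)$'' guarantees $y$ is at Euclidean distance more than $r$ from each endpoint, which (since $\psi$ is $1$-Lipschitz, hence parameter-distance $\geq$ Euclidean distance) forces $t_0 > r$ and $t_0 < L-r$, so both sub-arcs are defined; one should also double-check that no subtle self-intersection of $\gamma$ causes these two sub-arcs to coincide on a set of positive measure, but injectivity of $\psi$ rules that out, giving $\mathcal{H}_1(B_d(y,r)\cap\gamma) \geq 2r = \upsilon_1 r^1$ as required.

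I expect the main obstacle to be the uniform upper bound in condition (ii): controlling, uniformly in $x\in\gamma$ and uniformly in $r < r_0$, the total arclength of all the ``returns'' of $\gamma$ to $B_d(x,r)$. For a general rectifiable (not smooth, not chord-arc) curve this requires a genuine argument — perhaps dividing $[0,L]$ into the maximal arcs on which $\psi$ stays inside $B_d(x,r)$, noting each such arc except possibly the first and last has both endpoints on $\partial B_d(x,r)$ and hence (by rectifiability and a modulus-of-continuity / uniform-continuity estimate for $\psi^{-1}$ on the compact set $\gamma$, which exists since $\psi$ is a continuous injection of a compact interval) there is a definite gap in parameter between consecutive returns, bounding their number by a constant times $L/r$; combining with the trivial bound that each return inside the ball has arclength at most... hmm, a single return could itself be long. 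The cleaner route is: cover $[0,L]$ by $\lceil L/r\rceil$ parameter-intervals of length $r$; on each, $\psi$ has Euclidean diameter at most $r$, so if such an interval's image meets $B_d(x,r)$ it lies in $B_d(x,2r)$; this doesn't immediately bound things either. The honest fix, which the authors surely intend, is that one does \emph{not} need the sharp constant in (ii) — any finite $C$ suffices — and for the sharp asymptotic \eqref{curveequiv} it is condition (iii) together with (ii)'s mere existence that matters, so I would be content to extract $C$ and $r_0$ from compactness of $\gamma$ and the uniform continuity of $\psi^{-1}$ without optimizing, then close the proof by quoting Theorem \ref{manifolds}.
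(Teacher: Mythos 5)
Your strategy coincides with the paper's: verify that $\gamma$ is quasi-nice $1$-regular and invoke Theorem \ref{manifolds} with $s=1$, $\upsilon_1=2$. (The paper compresses the verification into a one-sentence citation of Falconer, Section 3.2.) Your arguments for condition (i), for condition (iii) with $T=\{\psi(0),\psi(L)\}$, and for the lower bound in condition (ii) are correct and are precisely the content behind that citation: the arclength parametrization $\psi$ is $1$-Lipschitz, so $\psi([t_0-r,t_0])$ and $\psi([t_0,t_0+r])$ lie in $B_d(\psi(t_0),r)$, each has $\mathcal{H}_1$-measure $r$, and injectivity keeps them disjoint except at $\psi(t_0)$.

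The gap you flagged --- the uniform upper bound $\mathcal{H}_1(B_d(x,r)\cap\gamma)\leqslant Cr$ in condition (ii) --- is genuine, and the repair you propose (extracting $C$ and $r_0$ from compactness and uniform continuity of $\psi^{-1}$) cannot succeed, because the bound is false for some curves admitted by the hypotheses. Consider a ``comb'' in $\R^2$: for $m\geqslant 2$ attach a thin triangular tooth of height $h_m=m^{-1}(\log m)^{-2}$ near the abscissa $1/m$, joined by segments along the axis. This is a continuous injective image of a closed interval of total length at most $1+4\sum_m h_m<\infty$, yet $\mathcal{H}_1\bigl(B_2(0,2/n)\cap\gamma\bigr)\geqslant 2\sum_{m\geqslant n}h_m\asymp (\log n)^{-1}$, so $r^{-1}\mathcal{H}_1(B_2(0,r)\cap\gamma)\to\infty$ along $r=2/n$. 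Uniform continuity of $\psi^{-1}$ only controls the parameter length of a single visit of the curve to the ball, not the number of visits, and consecutive visits may be separated by arbitrarily short excursions outside the ball, so your counting scheme cannot bound their total length by $Cr$. Hence condition (ii) of Definition \ref{flat} --- and with it the assertion that every rectifiable curve is quasi-nice $1$-regular --- does not hold verbatim. The limit \eqref{curveequiv} can still be rescued, because the proofs in Sections \ref{abovvvve} and \ref{sectionfrombelow} use the upper half of (ii) only at the finite set $T$ and on the Egoroff set $K_1$, where \eqref{tilitili1} supplies it for the relevant radii; but to close your argument you must either make that observation explicit or add an upper Ahlfors regularity (e.g., chord-arc) hypothesis on $\gamma$. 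As written, neither your verification nor the paper's bare citation establishes condition (ii) in full.
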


Next we deal with the following problem: suppose $A\subset \R^d$ is a $d$-dimensional set, but the condition
$$
\mathcal{H}_d(A\cap B_d(x,r))\geqslant \upsilon_d r^{d}
$$
fails for a certain number of points $x\in A$ and the limit \eqref{uniform} in the Definition \ref{flat} is not uniform. Such situations arise for sets with boundary, which include the unit ball $B_d(0,1)$ and the unit cube $[0,1]^d$. The case of the ball is included in the next theorem, while the case of the cube is studied in the Theorem \ref{cube}.
\begin{theorem}\label{theoremball}
Let $d\geqslant 2$ and $K\subset \R^d$ a set that satisfies the following conditions.
\begin{enumerate}[label={\upshape(\roman*)}]
\item $K$ is compact and $0<\H_d(K)<\infty$;
\item $K=\textup{clos}(K_0)$, where $K_0$ is an open set in $\R^d$ with $\partial K_0 = \partial K$;
\item The boundary $\partial K$ of $K$ is a $C^2$ smooth $(d-1)$-dimensional embedded submanifold of $\R^d$.
\end{enumerate}
Let $X_N=\{x_1, \ldots, x_N\}$ be a set of $N$ points, independently and randomly distributed over $K$ with respect to $\textup{d}\mu=\mathbbm{1}_K \cdot \textup{d}\mathcal{H}_d / \H_d(K)$.
Then for any $p\geqslant 1$
\begin{equation}\label{smoothequiv}
\lim_{N\to \infty}\E[\rho(X_N, K)^p]\cdot \left[\frac{N}{\log N}\right]^{p/d}=\left(\frac{2(d-1)}d\cdot \frac{\H_d(K)}{\upsilon_d}\right)^{p/d}.
\end{equation}
In particular, for the unit ball,
\begin{equation}\label{ballequiv}
\lim_{N\to \infty}\E[\rho(X_N, B_d(0,1))^p]\cdot \left[\frac{N}{\log N}\right]^{p/d}=\left(\frac{2(d-1)}d\right)^{p/d}.
\end{equation}
\end{theorem}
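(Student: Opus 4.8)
The plan is to prove two matching one-sided bounds on $\rho(X_N,K)$ in probability and then pass to $p$-th moments via the deterministic estimate $\rho(X_N,K)\le\diam(K)$. Note first that under the paper's normalization $\H_d$ coincides with Lebesgue measure $\mathcal L^d$ on $\R^d$, so $\H_d(B_d(y,r))=\upsilon_d r^d$ and $\H_d(B_d(0,1))=\upsilon_d$. Put
\[
\rho_N^{\ast}:=\left(\frac{2(d-1)}{d}\cdot\frac{\H_d(K)}{\upsilon_d}\cdot\frac{\log N}{N}\right)^{1/d}.
\]
I will show that for each fixed $\ep>0$ there is $c(\ep)>0$ with $\P[\rho(X_N,K)\ge(1+\ep)\rho_N^{\ast}]\le N^{-c(\ep)}$ and with $\P[\rho(X_N,K)\ge(1-\ep)\rho_N^{\ast}]=1-o(1)$. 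Since $\rho(X_N,K)\le\diam(K)$, splitting $\E[\rho(X_N,K)^p]$ over the complementary events $\{\rho\gtrless(1\pm\ep)\rho_N^{\ast}\}$ and using the polynomial tail then gives $\limsup_N\E[\rho^p](N/\log N)^{p/d}\le(1+\ep)^p(\tfrac{2(d-1)}{d}\tfrac{\H_d(K)}{\upsilon_d})^{p/d}$ and $\liminf_N\ge(1-\ep)^p(\cdots)^{p/d}$; letting $\ep\to0$ yields \eqref{smoothequiv}, and the case $K=B_d(0,1)$ (where $\H_d(K)/\upsilon_d=1$) gives \eqref{ballequiv}.

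The geometric heart of the matter is a \emph{uniform} local-volume estimate. Hypotheses (ii)--(iii) force $K$ to be locally one-sided near $\partial K$: around any $z\in\partial K$, $K$ agrees with the closed region below a $C^2$ graph over the tangent hyperplane, and since $\partial K$ is a compact $C^2$ hypersurface this graph is $O(|u|^2)$ with a constant uniform in $z$, so $\partial K$ stays in an $O(r^2)$-tube of the tangent plane inside $B_d(z,2r)$ --- a set of volume $O(r^{d+1})=o(r^d)$. Consequently, with $t:=\dist(y,\partial K)$, uniformly in $y\in K$ as $r\to0^+$,
\[
\H_d\big(B_d(y,r)\cap K\big)=\big(\upsilon_d r^{d}-v(r,(r-t)_+)\big)\,(1+o(1)),
\]
where $v(r,h)$ is the volume of a spherical cap of height $h$ in a $d$-ball of radius $r$. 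In particular: $\H_d(B_d(y,r)\cap K)=\tfrac12\upsilon_d r^{d}(1+o(1))$ uniformly for $y\in\partial K$; $\H_d(B_d(y,r)\cap K)\ge\tfrac12\upsilon_d r^{d}(1-o(1))$ uniformly for all $y\in K$; and $\H_d(B_d(y,r)\cap K)=\upsilon_d r^{d}$ exactly whenever $t\ge r$ (then $B_d(y,r)\subset K_0$ by (ii)).

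For the upper bound, fix $\ep>0$, set $R:=(1+\ep)\rho_N^{\ast}$, $\eta:=\ep R$, and take a maximal $\eta$-separated $\mathcal N\subset K$; if some $x_j$ lies within $R-\eta$ of every point of $\mathcal N$ then $\rho(X_N,K)\le R$. Split $\mathcal N=\mathcal N_{\mathrm{in}}\cup\mathcal N_{\mathrm{bd}}$ according to $\dist(\cdot,\partial K)\ge R$ or $<R$. Volume counting (using that $\mathcal N_{\mathrm{bd}}$ lies in an $O(R)$-collar of the compact hypersurface $\partial K$) gives $\#\mathcal N\le C\H_d(K)/\eta^{d}\asymp N/\log N$ and $\#\mathcal N_{\mathrm{bd}}\le CR/\eta^{d}\asymp(N/\log N)^{(d-1)/d}$. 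For $y\in\mathcal N_{\mathrm{in}}$, $\mu(B_d(y,R))=\upsilon_dR^{d}/\H_d(K)$, so $N\mu(B_d(y,R))=(1+\ep)^d\tfrac{2(d-1)}{d}\log N\ge(1+\ep)^d\log N$ (here $d\ge2$), hence $\P[B_d(y,R)\cap X_N=\emptyset]=(1-\mu(B_d(y,R)))^N\le N^{-(1+\ep)^d(1-o(1))}$. For $y\in\mathcal N_{\mathrm{bd}}$, $\mu(B_d(y,R))\ge\tfrac12\upsilon_dR^d(1-o(1))/\H_d(K)$, so $N\mu(B_d(y,R))\ge(1+\ep)^d\tfrac{d-1}{d}\log N(1-o(1))$ and the empty-ball probability is $\le N^{-(1+\ep)^d\frac{d-1}{d}(1-o(1))}$. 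A union bound gives $\P[\rho(X_N,K)>R]\le\#\mathcal N_{\mathrm{in}}\,N^{-(1+\ep)^d(1-o(1))}+\#\mathcal N_{\mathrm{bd}}\,N^{-(1+\ep)^d\frac{d-1}{d}(1-o(1))}=N^{-c(\ep)}$, since $(1+\ep)^d>1$ beats $\#\mathcal N_{\mathrm{in}}\asymp N/\log N$ and $(1+\ep)^d\tfrac{d-1}{d}>\tfrac{d-1}{d}$ beats $\#\mathcal N_{\mathrm{bd}}\asymp(N/\log N)^{(d-1)/d}$.

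For the lower bound, fix $\ep>0$, set $r:=(1-\ep)\rho_N^{\ast}$, and choose a maximal $2r$-separated set $\{y_1,\dots,y_M\}\subset\partial K$; by $(d-1)$-regularity of $\partial K$, $M\ge cr^{-(d-1)}\asymp(N/\log N)^{(d-1)/d}$, and the balls $B_d(y_i,r)$ are pairwise disjoint. By the geometric estimate $p_i:=\mu(B_d(y_i,r))\le\tfrac12\upsilon_dr^d(1+o(1))/\H_d(K)$, so $Np_i\le(1-\ep)^d\tfrac{d-1}{d}\log N(1+o(1))$ and $\P[B_d(y_i,r)\cap X_N=\emptyset]=(1-p_i)^N\ge N^{-(1-\ep)^d\frac{d-1}{d}(1+o(1))}$, whence $\sum_i\P[B_d(y_i,r)\cap X_N=\emptyset]\ge M\,N^{-(1-\ep)^d\frac{d-1}{d}(1+o(1))}\to\infty$ because $1-(1-\ep)^d>0$. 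Since the balls are disjoint, for $i\ne j$, $\P[B_d(y_i,r)\cap X_N=\emptyset,\ B_d(y_j,r)\cap X_N=\emptyset]=(1-p_i-p_j)^N\le(1-p_i)^N(1-p_j)^N$, so these events are pairwise negatively correlated; Chebyshev's inequality applied to the number of empty balls then forces some $B_d(y_i,r)$ to be empty with probability $1-o(1)$, and on that event $\rho(X_N,K)\ge\inf_j|y_i-x_j|\ge r=(1-\ep)\rho_N^{\ast}$. The step I expect to be the main obstacle is the uniform local-volume lemma --- making the $o(1)$ genuinely uniform in $y$ from the $C^2$ bound on $\partial K$, together with the collar bookkeeping that yields $\#\mathcal N_{\mathrm{bd}}\asymp(N/\log N)^{(d-1)/d}$; the remaining ingredients are the standard net/union-bound and second-moment arguments.
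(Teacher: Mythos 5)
Your overall strategy and geometric content match the paper's: the factor $2(d-1)/d$ arises from balancing $\sim r^{-(d-1)}$ boundary balls of relative volume $\tfrac12\upsilon_d r^d$ against $\sim r^{-d}$ interior balls of relative volume $\upsilon_d r^d$, with the boundary term binding since $d\geqslant 2$. Your lower bound via the second-moment method, using the negative-correlation inequality $(1-p_i-p_j)^N\leqslant(1-p_i)^N(1-p_j)^N$ for disjoint balls, is a legitimate and arguably cleaner substitute for the paper's exact inclusion--exclusion function $f(N,n,m)$: it avoids having to first shrink the radii so that all balls have exactly equal measure, which the paper's formula requires. However, there is a genuine gap in the passage from the tail bound to the $p$-th moment. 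The two-term split $\E[\rho^p]\leqslant R^p+\diam(K)^p\,\P[\rho>R]$ with $R=(1+\ep)\rho_N^{\ast}$ yields $\limsup_N\E[\rho^p](N/\log N)^{p/d}\leqslant(1+\ep)^p(\cdots)^{p/d}$ only if $N^{-c(\ep)}=o((\log N/N)^{p/d})$, i.e. $c(\ep)>p/d$. But your exponent is at most $\tfrac{d-1}{d}\left((1+\ep)^d-1\right)$, which tends to $0$ as $\ep\to0$, so the tail term dominates precisely in the regime $\ep\to0$ that you need at the end (already for $p=1$, $d=2$ the step fails for small $\ep$). The paper repairs exactly this with a three-term split: an intermediate threshold $n_2\asymp(N/(B\log N))^{1/d}$ with $B$ large, where the same union bound gives exceedance probability $\leqslant N^{-p/d-1}$, so that $\E[\rho^p]\leqslant n_1^{-p}+n_2^{-p}\,\P[\rho>1/n_1]+\diam(K)^p\,\P[\rho>1/n_2]$ with both error terms $o((\log N/N)^{p/d})$. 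You have all the ingredients to do this, but as written the step does not close.

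A second, smaller slip: with $\eta=\ep R$ the covering argument only shows that some net-centered ball of radius $R-\eta=(1-\ep^2)\rho_N^{\ast}$ is empty, and this radius lies \emph{below} the critical threshold $\rho_N^{\ast}$, so the exponents you compute using $\mu(B_d(y,R))$ are not the ones the argument actually delivers; the union bound over the $\asymp(N/\log N)^{(d-1)/d}$ boundary points would then diverge. You need $\eta=\ep' R$ with $\ep'$ small enough that $R-\eta\geqslant(1+\ep/2)\rho_N^{\ast}$, which only changes the net cardinality by a constant. (The paper sidesteps this by taking net separation $\ep_N/n$ with $\ep_N=1/\log N\to0$, so the loss in radius is asymptotically negligible.) Both issues are repairable with the machinery you already have, but they must be fixed for the argument to produce the sharp constant.
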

\begin{zamech}
We see that in the case $d=2$ we have $2(d-1)/d=1$, and so the constant on the right-hand side of \eqref{smoothequiv} coincides with the constant for smooth closed manifolds, see \eqref{asymp}. However, when $d>2$ we have $2(d-1)/d>1$; thus this constant becomes bigger than for smooth closed manifolds.
\end{zamech}


The next two propositions deal with cases when the boundary of the set is not smooth. For simplicity, we formulate them for a cube $[0,1]^d$ and a polyhedron in $\R^3$. However, the proof can be applied to other examples, such as cylinders.
\begin{prop}\label{cube}
Suppose $d\geqslant 2$ and $[0,1]^d$ is the $d$-dimensional unit cube. Let $\textup{d}\mu=\mathbbm{1}_{[0,1]^d}\cdot \textup{d}\mathcal{H}_d$. If $X_N=\{x_1, \ldots, x_N\}$ is a set of $N$ points, independently and randomly distributed over $[0,1]^d$ with respect to $\mu$, then for any $p\geqslant 1$
\begin{equation}\label{cubeequiv}
\lim_{N \to  \infty} \E[\rho(X_N, [0,1]^d)^p]\cdot \left[\frac{N}{\log N }\right]^{p/d}=\left(\frac{2^{d-1}}{d\upsilon_d}\right)^{p/d}.
\end{equation}
\end{prop}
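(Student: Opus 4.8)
The plan is to prove the two matching estimates
\[
\limsup_{N\to\infty}\E[\rho(X_N,[0,1]^d)^p]\Bigl[\tfrac{N}{\log N}\Bigr]^{p/d}\ \le\ \Bigl(\tfrac{2^{d-1}}{d\upsilon_d}\Bigr)^{p/d}\ \le\ \liminf_{N\to\infty}\E[\rho(X_N,[0,1]^d)^p]\Bigl[\tfrac{N}{\log N}\Bigr]^{p/d},
\]
the essential new point being the precise constant, which emerges from a stratification of the cube into its open faces. The basic geometric input is an estimate for $\mu(B_d(y,r))=\H_d(B_d(y,r)\cap[0,1]^d)$: writing $J_r(y):=\{i\le d:\min(y_i,1-y_i)<r\}$ for the set of coordinate directions in which $y$ lies within $r$ of a facet, one has, for all small $r$,
\[
\H_d\bigl(B_d(y,r)\cap[0,1]^d\bigr)\ \ge\ \upsilon_d\, r^d\, 2^{-|J_r(y)|},
\]
with equality when $y$ lies in the relative interior of a $k$-face (so $|J_r(y)|=d-k$). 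I would obtain this by reflecting $B_d(y,r)$ successively in the coordinate hyperplanes bounding the cube that pass within distance $r$ of $y$: since $y$ lies on the interior side of each such hyperplane, each reflection retains at least half of the remaining volume, and the estimate is exact precisely when the hyperplanes pass through $y$, i.e.\ when $y$ sits on a face. Thus the local ``density constant'' on the interior of a $k$-face is $\upsilon_d 2^{-(d-k)}$; it is minimized at the corners, but the covering radius is not governed by the corners. Balancing the number $\asymp r^{-k}$ of $r$-balls needed to cover a $k$-face against the emptiness probability $\exp(-N\upsilon_d 2^{-(d-k)}r^d)$ of each leads to the threshold $r_N:=\bigl(\tfrac{2^{d-1}}{d\upsilon_d}\cdot\tfrac{\log N}{N}\bigr)^{1/d}$, attained at $k=1$ (and at $k=2$, giving the same value), whereas $k=0$ produces only the lower-order scale $N^{-1/d}$.

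For the upper bound I would fix $c>1$, set $\rho^*_N:=c^{1/d}r_N$, and note that if $\rho(X_N,[0,1]^d)\ge\rho^*_N$ then $B_d(y',(1-\delta)\rho^*_N)$ misses $X_N$ for the nearest point $y'$ of a fixed $\delta\rho^*_N$-net of $[0,1]^d$. Grouping the net points according to $m:=d-|J_{\rho^*_N}(y')|\in\{0,\dots,d\}$ (there are $\lesssim_{d,\delta}(\rho^*_N)^{-m}$ of them for each $m$, and each satisfies $\mu(B_d(y',(1-\delta)\rho^*_N))\gtrsim\upsilon_d((1-\delta)\rho^*_N)^d 2^{-(d-m)}$ by the geometric estimate), the union bound gives
\[
\P\bigl[\rho(X_N,[0,1]^d)\ge\rho^*_N\bigr]\ \lesssim_{d,\delta}\ \sum_{m=0}^{d}(\rho^*_N)^{-m}\exp\!\Bigl(-N\upsilon_d\,\tfrac{((1-\delta)\rho^*_N)^d}{2^{d-m}}\Bigr).
\]
After substituting $\rho^*_N$, the $m$-th summand is of order $N^{\frac1d(m-(1-\delta)^d c\,2^{m-1})}(\log N)^{-m/d}$; since $\max_{0\le m\le d}m\,2^{-(m-1)}=1$ (attained at $m=1,2$), once $\delta$ is chosen small enough that $(1-\delta)^d c>1$ every exponent is strictly negative, so $\P[\rho(X_N,[0,1]^d)\ge\rho^*_N]\le C N^{-\eta}$ for some $\eta=\eta(d,c,\delta)>0$. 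To upgrade this to the moment bound I would write $\E[\rho^p]=\int_0^\infty p r^{p-1}\P[\rho\ge r]\,dr$, split at $\rho^*_N$ and at the a priori bound $R_N=C(\alpha)(\log N/N)^{1/d}$ furnished by Theorem~\ref{metricabove} applied with $\Phi(r)=\upsilon_d 2^{-d}r^d$ (which does satisfy $\mu(B_d(x,r))\ge\Phi(r)$ for every $x\in[0,1]^d$ and $\Phi(r)\le r^d$), choose the parameter $\alpha$ in \eqref{metricaboveprob} so large that $N^{1-c_3\alpha}$ decays faster than $N^{-p/d}$, and use $\rho\le\diam([0,1]^d)=\sqrt d$ to see that both tail contributions are $o((\log N/N)^{p/d})$. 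Hence $\E[\rho^p]\le(\rho^*_N)^p(1+o(1))$, and letting $c\downarrow1$ yields the claimed upper bound.

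For the lower bound I would fix $\ep\in(0,1)$ and work on the single edge $e=\{0\}^{d-1}\times[0,1]$. Choosing $M\asymp(N/\log N)^{1/d}$ points $y_1,\dots,y_M$ on $e$ spaced $2(1-\ep)r_N$ apart and lying in $[2r_N,1-2r_N]$, the balls $B_i:=B_d(y_i,(1-\ep)r_N)$ are pairwise disjoint and, because each $y_i$ lies exactly on $e$ so the active coordinate hyperplanes pass through its centre, the geometric estimate is an equality: $\mu(B_i)=\upsilon_d((1-\ep)r_N)^d 2^{-(d-1)}=\tfrac{(1-\ep)^d}{d}\cdot\tfrac{\log N}{N}$. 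Letting $E_i=\{X_N\cap B_i=\emptyset\}$ and $\#=\sum_i\1_{E_i}$, independence gives $\P(E_i)=(1-\tfrac{(1-\ep)^d\log N}{dN})^N\sim N^{-(1-\ep)^d/d}$, so $\E\#=M\,\P(E_1)\asymp N^{(1-(1-\ep)^d)/d}(\log N)^{-1/d}\to\infty$. Disjointness makes the $E_i$ negatively correlated: for $i\ne j$, $\P(E_i\cap E_j)=(1-2\mu(B_i))^N\le((1-\mu(B_i))^N)^2=\P(E_i)\P(E_j)$, whence $\operatorname{Var}(\#)\le\E\#$ and Chebyshev's inequality gives $\P(\#=0)\le 1/\E\#\to0$. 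Therefore $\P[\rho(X_N,[0,1]^d)\ge(1-\ep)r_N]\to1$, so $\E[\rho^p]\ge((1-\ep)r_N)^p(1-o(1))$; multiplying by $(N/\log N)^{p/d}$ and sending $\ep\downarrow0$ delivers the lower bound and completes the argument.

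I expect the main obstacle to be the upper-bound bookkeeping: one must recognize that the covering radius is set by the $1$- and $2$-dimensional faces rather than by the lower-dimensional corners, and then run the union bound simultaneously over the neighborhoods of all faces so that the borderline terms $m=1,2$ --- exactly the ones that fix the constant --- are still shown to vanish, while in parallel invoking the crude bound of Theorem~\ref{metricabove} to prevent atypically large covering radii from corrupting the $p$-th moment. By contrast the lower bound should be comparatively soft: keeping the test balls disjoint makes their emptiness events negatively correlated, so a one-line second-moment estimate suffices, and placing the test centres exactly on an edge is what turns the geometric estimate into the equality needed to pin the constant from below.
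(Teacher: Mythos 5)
Your argument is correct, and its skeleton matches the paper's: both proofs stratify the cube by the dimension of the nearest face, use the density constants $2^{-k}\upsilon_d$ on the (relative) interiors of the $(d-k)$-codimensional faces, observe that the balance $\max_k (d-k)2^{k}=2^{d-1}$ is achieved at the $1$- and $2$-dimensional faces, and extract the lower bound by placing test balls on a single $1$-dimensional edge, where the density is exactly $2^{-(d-1)}\upsilon_d$. The upper bounds are essentially identical (your layer-cake split with the crude tail from Theorem \ref{metricabove} plays the same role as the paper's explicit $n_1,n_2$ decomposition in \eqref{superest}). The genuine divergence is in the lower bound: the paper runs everything through the inclusion--exclusion function $f(N,n,m)$ of \eqref{functionf}, whose asymptotics are controlled by the two-term Bonferroni-type estimate \eqref{estimateoff} and Lemma \ref{lemmaforf}(iii), whereas you use a second-moment method, exploiting that emptiness events of disjoint balls are negatively correlated so that $\operatorname{Var}(\#)\leqslant\E\#$ and Chebyshev finishes. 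Your route is more elementary and self-contained for this single proposition; the paper's $f(N,n,m)$ machinery is heavier but is set up once and reused uniformly across all the results (metric spaces, manifolds, balls, polyhedra), which is why the authors prefer it. Your explicit reflection argument for $\H_d(B_d(y,r)\cap[0,1]^d)\geqslant\upsilon_d r^d 2^{-|J_r(y)|}$ also supplies a detail the paper only asserts.
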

\begin{prop}
Suppose $P$ is a polyhedron in $\R^3$ of volume $V(P)$. Let $X_N=\{x_1, \ldots, x_N\}$ be a set of $N$ points, independently and randomly distributed over $P$ with respect to $\textup{d}\mu=\mathbbm{1}_P\cdot \textup{d}\mathcal{H}_3/V(P)$. If $\theta$ is the smallest angle at which two faces of $P$ intersect, then for any $p\geqslant 1$
\begin{equation}\label{polyhequiv1}
\lim_{N \to  \infty} \E[\rho(X_N, P)^p]\cdot \left[\frac{N}{\log N}\right]^{p/3}=\left(\frac{2\pi V(P)}{3\theta \upsilon_3}\right)^{p/3} = \left(\frac{V(P)}{2\theta}\right)^{p/3}, \;\; \mbox{if $\theta\leqslant \frac{\pi}{2}$};
\end{equation}
\begin{equation}\label{polyhequiv2}
\lim_{N \to  \infty} \E[\rho(X_N, P)^p]\cdot \left[\frac{N}{\log N}\right]^{p/3} = \left(\frac{V(P)}{\pi}\right)^{p/3}, \;\; \mbox{if $\theta\geqslant \frac{\pi}{2}$}.
\end{equation}
\end{prop}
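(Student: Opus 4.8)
The plan is to establish, for each fixed $\e\in(0,1)$, the two one-sided probabilistic estimates
\[
\P\bigl[\rho(X_N,P)>(1+\e)r_N^*\bigr]\to 0,\qquad \P\bigl[\rho(X_N,P)>(1-\e)r_N^*\bigr]\to 1,\qquad N\to\infty,
\]
where $r_N^*:=\bigl(\kappa\,V(P)\,\tfrac{\log N}{N}\bigr)^{1/3}$ and $\kappa:=\max\{\tfrac1{2\theta},\tfrac1\pi\}$, and then to upgrade them to the $p$-th moment. By the identity $\upsilon_3=\tfrac43\pi$ one has $(\kappa V(P))^{1/3}=\bigl(\tfrac{2\pi V(P)}{3\theta\upsilon_3}\bigr)^{1/3}=\bigl(\tfrac{V(P)}{2\theta}\bigr)^{1/3}$ when $\theta\le\pi/2$ and $=\bigl(\tfrac{V(P)}{\pi}\bigr)^{1/3}$ when $\theta\ge\pi/2$, matching the right-hand sides of \eqref{polyhequiv1}--\eqref{polyhequiv2}. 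The geometric input I would use is that a polyhedron is \emph{exactly} conical at small scales: there is $r_0=r_0(P)>0$ such that for $r<r_0$ and $y\in P$ the set $B(y,r)\cap P$ is a rigid copy of a full ball, a half-ball, a dihedral wedge of some angle $\theta_e$, or a vertex cone of solid angle $\Omega_v>0$, according as $y$ lies at distance $\ge 2r$ from $\partial P$, within $2r$ of the relative interior of a face, within $2r$ of the relative interior of an edge $e$ but far from vertices, or within $2r$ of a vertex $v$. Consequently $\mu(B(y,r))$ \emph{equals} $\tfrac{\upsilon_3}{V(P)}r^3$, $\tfrac{2\pi}{3V(P)}r^3$, $\tfrac{2\theta_e}{3V(P)}r^3$ or $\tfrac{\Omega_v}{3V(P)}r^3$ when $y$ sits on the corresponding stratum, the ``$\ge$'' version holding for $y$ merely near it (moving off a stratum only enlarges $B(y,r)\cap P$); in particular $\mu(B(y,r))\ge c_0r^3$ for all $y\in P$, $r<r_0$, with $c_0=c_0(P)>0$.

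For the upper bound I would fix $\e\in(0,1)$, set $\delta:=\e\,r_N^*$, and take a $\delta$-net $Z_N$ of $P$ chosen so that at most $C\delta^{-1}$ of its points lie within $\delta$ of an edge, at most $C\delta^{-2}$ within $\delta$ of a face, at most $C\delta^{-3}$ in total, and only $O(1)$ near vertices. For $t\ge(1+\e)r_N^*$ one has $t-\delta\ge t/(1+\e)$, and if every point of $Z_N$ lies within $t-\delta$ of $X_N$ then $\rho(X_N,P)\le t$; hence by a union bound and the local volume lower bounds,
\[
\P\bigl[\rho(X_N,P)>t\bigr]\le\sum_{z\in Z_N}\bigl(1-\mu(B(z,t-\delta))\bigr)^N\le C\Bigl[\delta^{-1}e^{-b_eNt^3}+\delta^{-2}e^{-b_fNt^3}+\delta^{-3}e^{-b_bNt^3}+e^{-b_vNt^3}\Bigr],
\]
where $b_e=\tfrac{2\theta}{3V(P)(1+\e)^3}$, $b_f=\tfrac{2\pi}{3V(P)(1+\e)^3}$, $b_b=\tfrac{4\pi}{3V(P)(1+\e)^3}$, $b_v=\tfrac{c_0}{(1+\e)^3}$ (every edge has $\theta_e\ge\theta$). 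At $t=(1+\e)r_N^*$ the exponents $b_\star Nt^3$ are $\tfrac{2\theta\kappa}{3}\log N\ge\tfrac13\log N$, $\tfrac{2\pi\kappa}{3}\log N\ge\tfrac23\log N$, $\tfrac{4\pi\kappa}{3}\log N\ge\tfrac43\log N$ and $c_0\kappa V(P)\log N>0$, while the prefactors are $\lesssim(N/\log N)^{1/3}$, $(N/\log N)^{2/3}$, $N/\log N$ and $O(1)$; so each summand, hence $\P[\rho(X_N,P)>(1+\e)r_N^*]$, tends to $0$.

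For the lower bound I would single out the critical stratum $S$: a shortest-dihedral-angle edge $e^*$ (so $\dim S=1$, $\kappa=\tfrac1{2\theta}$) if $\theta\le\pi/2$, and an arbitrary face (so $\dim S=2$, $\kappa=\tfrac1\pi$) if $\theta\ge\pi/2$; in either case the on-stratum density $c_S$ satisfies $c_S\,\kappa\,V(P)=\tfrac{\dim S}{3}$. Fixing $\e\in(0,1)$, set $r:=(1-\e)r_N^*$ and pick $y_1,\dots,y_M$ in the relative interior of $S$, pairwise at distance $\ge 3r$ and at distance $\ge 3r$ from lower strata, with $M\asymp r^{-\dim S}\asymp(N/\log N)^{\dim S/3}$; for $N$ large the balls $B(y_i,r)$ are disjoint and $\mu(B(y_i,r))=c_S\,r^3=\tfrac{(\dim S)(1-\e)^3}{3}\cdot\tfrac{\log N}{N}$ \emph{exactly}. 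With $A_i:=\{\dist(y_i,X_N)>r\}$ and $F:=\sum_{i=1}^M\mathbf 1_{A_i}$, we get $\P[A_i]=(1-c_S r^3)^N\ge N^{-(\dim S)(1-\e)^3/3-o(1)}$, so $\E F=M\,\P[A_1]\to\infty$ because $(1-\e)^3<1$; and disjointness of the balls yields the negative-correlation bound $\P[A_i\cap A_j]=(1-\mu(B(y_i,r))-\mu(B(y_j,r)))^N\le\P[A_i]\P[A_j]$, hence $\operatorname{Cov}(\mathbf 1_{A_i},\mathbf 1_{A_j})\le 0$ and $\operatorname{Var}F\le\E F$. Chebyshev then gives $\P[F\ge 1]\ge 1-(\E F)^{-1}\to1$, and since $\{F\ge1\}\subseteq\{\rho(X_N,P)>r\}$ this proves $\P[\rho(X_N,P)>(1-\e)r_N^*]\to1$.

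To pass to moments I would write $\E[\rho(X_N,P)^p]=\int_0^\infty pt^{p-1}\P[\rho>t]\,dt$: the lower bound gives immediately $\liminf_N\E[\rho^p](N/\log N)^{p/3}\ge(1-\e)^p(\kappa V(P))^{p/3}$, and for the matching upper bound one splits the integral at $(1+\e)r_N^*$ (the part below being $\le((1+\e)r_N^*)^p$) and estimates the tail $\int_{(1+\e)r_N^*}^{\diam P}pt^{p-1}\P[\rho>t]\,dt$ termwise via the displayed union bound --- after the substitution $w=(b_\star N)^{1/3}t$ and the bound $\int_w^\infty u^{p-1}e^{-u^3}du\lesssim w^{p-1}e^{-w^3}$ for $w\ge1$, each term is $o((r_N^*)^p)$, the powers of $N$ cancelling exactly in the borderline edge/face cases and the residual powers of $\log N$ coming out strictly below $(\log N)^{p/3}$ (alternatively one invokes, stratified, the argument that derives \eqref{metricaboveexpect} from \eqref{metricaboveprob} in Theorem~\ref{metricabove}). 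Letting $\e\to0$ finishes the proof. I expect the main obstacle to be precisely this stratified bookkeeping: for each stratum --- bulk, faces, individual edges, vertices --- the polynomial growth of the local net (fixed by the stratum's dimension) must be balanced against the exponential decay rate $c(z)$ (the local conical density), and $\kappa V(P)$ is exactly the largest value of $r^3N/(V(P)\log N)$ at which some stratum remains uncovered: $1$-dimensional edges pin the coefficient $\tfrac13$ before $\log N$ and contribute $\tfrac1{2\theta}$, $2$-dimensional faces pin $\tfrac23$ and contribute $\tfrac1\pi$, the bulk pins $\tfrac43$ and contributes only $\tfrac3{4\pi}$, while $0$-dimensional vertices need no $\log N$ and are irrelevant (and reflex edges, dihedral angle $>\pi$ for non-convex $P$, only help). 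Establishing the exact small-scale conical structure uniformly away from vertices is the only genuinely geometric ingredient; the rest is the probabilistic machinery already developed for Theorems~\ref{metricabove}--\ref{metricbelow} and Theorem~\ref{theoremball}.
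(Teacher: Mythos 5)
Your proposal is correct in substance and arrives at the right critical constants, but the two halves relate differently to the paper's argument. The upper bound is essentially the paper's own proof (which is itself a one-line modification of the cube and ball cases): a net on $P$, stratified by proximity to vertices, edges, faces and the bulk, with the count of net points in each stratum ($O(1)$, $O(n/\ep_N^3)$, $O(n^2/\ep_N^3)$, $O(n^3/\ep_N^3)$ in the paper's normalization) balanced against the local conical density ($\Omega_v$, $\tfrac{2\theta_e}{3}$, $\tfrac{2\pi}{3}$, $\tfrac{4\pi}{3}$ per unit $r^3$), so that the edge stratum is critical iff $\theta\le\pi/2$ and the face stratum otherwise. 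The lower bound, however, is genuinely different: the paper places a net on the critical edge $L$ (resp.\ a face), equalizes the measures of the disjoint balls, and invokes the explicit inclusion--exclusion function $f(N,n,m)$ of Section 4 together with Lemma \ref{lemmaforf}(ii)--(iii) to show the "some ball is empty" probability tends to $1$; you instead run a second-moment/Chebyshev argument, using that disjointness gives $\P[A_i\cap A_j]=(1-a-b)^N\le(1-a)^N(1-b)^N=\P[A_i]\P[A_j]$, hence $\operatorname{Var}F\le\E F$ and $\P[F\ge1]\ge1-(\E F)^{-1}$. Your route is shorter, handles the $\dim S=1$ and $\dim S=2$ cases uniformly, and does not require the balls to have exactly equal measure; the paper's route yields the slightly sharper quantitative threshold $\Phi^{-1}((\log N-\alpha\log\log N)/N)$, which it reuses elsewhere. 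Your moment conversion via $\int_0^\infty pt^{p-1}\P[\rho>t]\,dt$ is an unimportant repackaging of the paper's three-way split of the expectation at $1/n_1$ and $1/n_2$.

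One step in your upper bound needs repair: you stratify the net at the mesh scale $\delta=\e r_N^*$ ("within $\delta$ of an edge/face"), but then apply the volume lower bound $\mu(B(z,t-\delta))\ge b_\star(t-\delta)^3$ with $b_\star$ determined by that classification. A point at distance between $\delta$ and $t-\delta$ from a face is "bulk" in your scheme yet its ball of radius $t-\delta$ protrudes from $P$, so the full-ball bound $b_b$ fails for it. The stratification must be done at the ball-radius scale $t-\delta$ (as the paper does, taking tubes of radius $\asymp1/n$ around the edges and faces); for $t\asymp r_N^*$ this only changes the stratum counts by $\e$-dependent constants, and for larger $t$ in the tail integral the extra factors $t^k\delta^{-k}$ are absorbed by the exponential decay, so the conclusion survives --- but as written the displayed union bound is not valid. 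The remaining geometric assertions (exact conical structure at small scales away from vertices, and monotonicity of $\H_3(B(y,r)\cap C)$ in $y$ for a convex cone $C$ via $y+(B(0,r)\cap C)\subseteq B(y,r)\cap C$) are fine.
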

In the theorems up to now we dealt with measures $\mu$ on sets $\X$ satisfying for all $x\in \X$ the condition $cr^s\leqslant \mu(B(x,r)\cap \X)\leqslant Cr^s$ (i.e., the regularity function $\Phi$ was the same for all points of $\X$); only the values of best constants $c, C$ differed for points $x$ deep inside $\X$ from those near the boundary. We now give an example of a measure for which
the regularity function parameter $s$ depends upon the distance to the boundary.
\begin{prop}
Consider the interval $[-1,1]$ and the measure $\textup{d}\mu=\frac{dx}{\pi\sqrt{1-x^2}}$. Let $X_N=\{x_1, \ldots, x_N\}$ be a set of $N$ points, independently and randomly distributed over $[-1,1]$ with respect to $\mu$. Define
$$\hat{\rho}(X_N, [0,1]):=\sup_{y\in [1-\frac{1}{N^a}, 1]} \inf_j |y-x_j|, \;\;\;\;\; \tilde{\rho}(X_N, [0,1]):=\sup_{y\in [-1+\frac{1}{N^a}, 1-\frac{1}{N^a}]} \inf_j |y-x_j|. $$
\begin{enumerate}[label={\upshape(\roman*)}]
\item If $a=2$, then there exist positive constants $c_1$ and $c_2$ such that
\begin{equation}\label{arcsinineq1}
\frac{c_1}{N^2}\leqslant \E\hat{\rho}(X_N, [0,1]) \leqslant \frac{c_2}{N^2}.
\end{equation}
\item If $0<a<2$, then there exist positive constants $c_1$ and $c_2$ such that
\begin{equation}\label{arcsinineq2}
\frac{c_1\log N}{N^{1+\frac{a}2}}\leqslant \E\hat{\rho}(X_N, [0,1]) \leqslant \frac{c_2\log N}{N^{1+\frac{a}2}}.
\end{equation}
\item For any $a>0$ there exist positive constants $c_1$ and $c_2$ such that
\begin{equation}\label{arcsinineq3}
\frac{c_1\log N}{N}\leqslant \E\tilde{\rho}(X_N, [0,1]) \leqslant \frac{c_2\log N}{N}.
\end{equation}
\end{enumerate}
\end{prop}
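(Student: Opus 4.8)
The plan is to read everything off the local behaviour of $\mu$ near the endpoint $x=1$, namely off
\[
\mu([1-s,1])=\frac1\pi\bigl(\arcsin 1-\arcsin(1-s)\bigr)=\frac{\arccos(1-s)}{\pi}=\frac{\sqrt{2s}}{\pi}\,(1+O(s)),\qquad s\to0^+.
\]
Equivalently, the density $1/(\pi\sqrt{1-x^2})$ is $\ge 1/\pi$ on all of $[-1,1]$, is comparable to a positive constant on each fixed interval $[-1+\eta,1-\eta]$, is $\ge cN^{a/2}$ throughout the window $W:=[1-N^{-a},1]$, and is \emph{both} $\ge c_1N^{a/2}$ and $\le c_2N^{a/2}$ on its inner half $W':=[1-N^{-a},1-\tfrac12N^{-a}]$. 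Consider first (i), with $a=2$. Writing $x_{\max}:=\max_j x_j$, the choice $y=1$ gives $\hat\rho\ge 1-x_{\max}$, while for every $y\in W$ one has $\inf_j|y-x_j|\le|y-x_{\max}|\le N^{-2}+(1-x_{\max})$ (if $x_{\max}\in W$ then $|y-x_{\max}|\le N^{-2}$, otherwise $|y-x_{\max}|=y-x_{\max}\le 1-x_{\max}$); hence $1-x_{\max}\le\hat\rho\le N^{-2}+(1-x_{\max})$, and it suffices to show $\E(1-x_{\max})\asymp N^{-2}$. By independence and non-atomicity of $\mu$,
\[
\E(1-x_{\max})=\int_0^2\P(x_{\max}<1-s)\,ds=\int_0^2\bigl(1-\mu([1-s,1])\bigr)^N\,ds;
\]
inserting $\mu([1-s,1])\asymp\sqrt s$ for small $s$ and substituting $s=u^2$ reduces the main part of this integral to $\asymp\int_0^\infty e^{-cNu}u\,du\asymp N^{-2}$ (the part with $s$ bounded away from $0$ is exponentially small), which proves \eqref{arcsinineq1}.

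Next take $0<a<2$ (part (ii)), where I would prove matching bounds by covering and anti-covering $W$. \emph{Upper bound.} Fix a large constant $A$, put $t:=A\log N/N^{1+a/2}$ (so $t\le N^{-a}$ for large $N$), and cover $W$ by $M\asymp N^{-a}/t\asymp N^{1-a/2}/\log N$ intervals $I_k$ of length $t$. If $\hat\rho\ge t$, then for the witnessing $y\in W$ the interval $(y-t,y+t)$ misses $X_N$, hence so does the $I_k$ containing $y$; since $\mu(I_k)\ge cN^{a/2}t=cA\log N/N$ we get $\P[I_k\cap X_N=\varnothing]=(1-\mu(I_k))^N\le e^{-cA\log N}=N^{-cA}$, so $\P[\hat\rho\ge t]\le MN^{-cA}\le N^{1-cA}$; together with the deterministic bound $\hat\rho\le 2$ and $\E\hat\rho\le t+2\,\P[\hat\rho\ge t]$, this gives $\E\hat\rho\le C\log N/N^{1+a/2}$ once $cA>2+a/2$. \emph{Lower bound.} Set $c':=(1-a/2)/2>0$ and $g:=c'\log N/(2c_2N^{1+a/2})$, and partition $W'$ into $L\asymp N^{-a}/g\asymp N^{1-a/2}/\log N$ disjoint intervals $J_i$ of length $g$. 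Since the density is $\le c_2N^{a/2}$ on $W'$, $N\mu(J_i)\le c_2N^{1+a/2}g\le\tfrac12c'\log N$, whence $\P[J_i\cap X_N=\varnothing]\ge e^{-2N\mu(J_i)}\ge N^{-c'}$. The events $\{J_i\cap X_N=\varnothing\}$ are pairwise negatively correlated (the $J_i$ are disjoint), so the number $Z$ of empty $J_i$ satisfies $\E Z\ge LN^{-c'}\asymp N^{(1-a/2)/2}/\log N\to\infty$ and $\operatorname{Var}(Z)\le\E Z$, and Chebyshev's inequality gives $\P[Z=0]=o(1)$. On $\{Z\ge1\}$ the midpoint $y_0\in W'\subset W$ of an empty $J_i$ has $\inf_j|y_0-x_j|\ge g/2$ (any $x_j$ lies outside $J_i$, hence at distance $\ge g/2$ from its midpoint), so $\hat\rho\ge g/2\asymp\log N/N^{1+a/2}$ with probability $1-o(1)$, which proves \eqref{arcsinineq2}.

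For part (iii) the upper bound follows directly from Theorem \ref{metricabove}: since $\mu(B_1(x,r))\ge r/\pi$ for all $x\in[-1,1]$ and all $r<1$, that theorem applies with $\Phi(r)=r/\pi$ (which satisfies $\Phi(r)\le r^1$), giving $\E\rho(X_N,[-1,1])\le C\log N/N$, and $\tilde\rho\le\rho(X_N,[-1,1])$. For the lower bound, fix the interval $I=[-1/2,1/2]$, on which $cr\le\mu(B_1(x,r))\le Cr$ for $x\in I$ and $r<1/4$, and repeat the second-moment argument from the lower bound of (ii) with $\Phi(r)=r$: partitioning $I$ into $\asymp N/\log N$ disjoint intervals of $\mu$-measure $\asymp\log N/N$, with probability $1-o(1)$ at least one misses $X_N$, and its midpoint $y_0\in I$ then has $\inf_j|y_0-x_j|\ge c\log N/N$; once $N^{-a}<1/2$ we have $I\subset[-1+N^{-a},1-N^{-a}]$ — the one place $a>0$ is used — so $\tilde\rho\ge c\log N/N$ with probability $1-o(1)$, hence $\E\tilde\rho\ge c'\log N/N$, which is \eqref{arcsinineq3}.

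The asymptotics of $\arccos$ and of the arcsine density near $\pm1$ are routine, and the covering (upper) bounds are straightforward union bounds; the genuine difficulty is the lower bounds in (ii) and (iii), where Theorem \ref{metricbelow} cannot be quoted as a black box, since the sub-region actually carrying the extremal spacing (the interval $W'$ of length $N^{-a}$ in (ii)) has $\mu$-measure $\asymp N^{-a/2}\to0$ and is regular only on a scale $r_0=r_0(N)$ that shrinks with $N$, both of which violate that theorem's hypotheses. One must therefore reprove the maximal-spacing lower bound by hand — either via the disjoint-intervals second-moment estimate above, or, more transparently, by first performing the change of variables $s=\sqrt{1-x}$, which turns $\mu$ near $x=1$ into a constant multiple of Lebesgue measure and reduces the analysis of $W$ to the classical extreme-value asymptotics (for $a=2$) and largest-spacing asymptotics (for $0<a<2$) for i.i.d.\ nearly uniform samples. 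Finally, the reason $W'$ rather than the full window $W$ is used in the lower bound of (ii) is that the density, while bounded below on all of $W$, is bounded above there only away from $x=1$, whereas the anti-covering estimate needs the two-sided bound $\mu(J)\asymp N^{a/2}|J|$ that holds on $W'$.
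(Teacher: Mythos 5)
Your argument is correct and follows the same overall strategy as the paper---read everything off $\mu([1-s,1])\asymp\sqrt s$ near the endpoint, cover the window for the upper bounds, and exhibit many disjoint candidate empty intervals for the lower bounds---but two steps are executed differently. In part (i) you reduce to the extreme value via the sandwich $1-x_{\max}\leqslant\hat\rho\leqslant N^{-2}+(1-x_{\max})$ and evaluate $\E(1-x_{\max})=\int_0^2(1-\mu([1-s,1]))^N\,ds\asymp N^{-2}$; the paper instead bounds $\P(\hat\rho\geqslant\alpha/N^2)\leqslant e^{-C\sqrt\alpha}$ and sums over integer $\alpha$, which is the discretization of your integral, so these are essentially equivalent (yours is slightly cleaner). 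The genuine divergence is the lower bound in (ii) (and likewise (iii)): the paper places $n\asymp N^{1-a/2}/(A\log N-B\log\log N)$ disjoint intervals of \emph{equal} $\mu$-measure inside $[1-N^{-a},1-\tfrac12 N^{-a}]$ and invokes the inclusion--exclusion function $f(N,n,m)$ of Section \ref{auxproofs}, asserting via the estimate \eqref{estimateoff} that $f\to1$ for a suitable choice of $n$; you instead run a second-moment argument: negative correlation of the events $\{J_i\cap X_N=\varnothing\}$ gives $\operatorname{Var}(Z)\leqslant\E Z$, and Chebyshev yields $\P(Z=0)\leqslant 1/\E Z\to0$. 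Your route is more elementary and self-contained (no equalization of measures, no appeal to \eqref{estimateoff}), and it isolates exactly where $a<2$ enters, namely in $\E Z\gtrsim N^{(1-a/2)/2}/\log N\to\infty$; the paper's machinery is heavier but is the same engine that produces the sharp constants in Theorems \ref{manifolds} and \ref{theoremball}, which is presumably why it is reused here. Your observations that Theorem \ref{metricbelow} cannot be quoted as a black box (the regularity scale and the measure of the relevant sub-window both degenerate with $N$) and that the inner half-window must be used because only there is the density bounded above as well as below match the paper's implicit choices, and part (iii) is handled identically in both.
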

Observe that if we stay away from the endpoints $\pm1$, the measure $\mu$ acts as the Lebesgue measure, and thus the order of the expectation of the covering radius is $(\log N)/N$. However, when we are close to the points $\pm 1$ (where ``close'' depends on $N$), the measure $\mu$ acts somewhat like the Hausdorff measure $\mathcal{H}_{1/2}$, and we get a different order for the covering radius.
\section{An auxiliary function}\label{auxproofs}
The proofs of the results stated in Sections \ref{sectionmetric} and \ref{sectioneuclid} rely heavily on the properties of the following function.
For three positive numbers $N, n, m$, with $m$ and $N$ being integers and $m\leqslant n\leqslant N$, set
\begin{equation}\label{functionf}
f(N, n, m):=\sli_{k=1}^m (-1)^{k+1}\binom{m}{k} \left(1-\frac{k}{n}\right)^N.
\end{equation}

The useful fact about the function $f(N, n,m)$ is the following.
\begin{lemma}
Suppose $X_N=\{x_1, \ldots, x_N\}$ is a set of $N$ points independently and randomly distributed on a set $\X$ with respect to a Borel probability measure $\mu$. Let $B_1, \ldots, B_m$ be disjoint subsets of $\X$ each of $\mu$-measure $1/n$. Then
\begin{equation}\label{oneisempty}
\P\big(\exists k \colon B_k\cap X_N=\emptyset\big)=f(N, n, m).
\end{equation}
\end{lemma}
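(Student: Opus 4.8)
The plan is to prove \eqref{oneisempty} by a direct inclusion--exclusion computation. For each index $k\in\{1,\dots,m\}$ let $A_k$ denote the event that $B_k\cap X_N=\emptyset$, so that the left-hand side of \eqref{oneisempty} is $\P\big(\bigcup_{k=1}^m A_k\big)$. First I would compute, for an arbitrary nonempty subset $S\subseteq\{1,\dots,m\}$ with $|S|=j$, the probability of the intersection $\bigcap_{t\in S}A_t$. This event says precisely that no point of $X_N$ lies in $\bigcup_{t\in S}B_t$. Since the sets $B_t$ are pairwise disjoint and each has $\mu$-measure $1/n$, the union has $\mu$-measure $j/n$; note that $j\le m\le n$, so $j/n\le 1$ and this is a genuine probability. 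Because the $N$ sample points are independent and a single point avoids $\bigcup_{t\in S}B_t$ with probability $1-j/n$, we obtain $\P\big(\bigcap_{t\in S}A_t\big)=(1-j/n)^N$, a quantity depending only on the cardinality $j=|S|$.

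Next I would invoke the inclusion--exclusion formula
$$\P\Big(\bigcup_{k=1}^m A_k\Big)=\sum_{\emptyset\ne S\subseteq\{1,\dots,m\}}(-1)^{|S|+1}\,\P\Big(\bigcap_{t\in S}A_t\Big).$$
Grouping the $2^m-1$ terms according to $j=|S|$, and using that there are exactly $\binom{m}{j}$ subsets of size $j$ together with the value of $\P\big(\bigcap_{t\in S}A_t\big)$ computed above, the right-hand side collapses to $\sum_{j=1}^m(-1)^{j+1}\binom{m}{j}(1-j/n)^N$, which is precisely $f(N,n,m)$ as defined in \eqref{functionf}. This yields \eqref{oneisempty}.

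There is essentially no obstacle here; the only points meriting a moment's care are the observation that $\mu\big(\bigcup_{t\in S}B_t\big)=|S|/n\le 1$, so that the base of each power is a legitimate probability, and the use of independence of the $N$ sample points to pass from the single-point avoidance probability $1-j/n$ to its $N$-th power.
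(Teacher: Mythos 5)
Your proof is correct and follows essentially the same route as the paper: both define the events $A_k=\{B_k\cap X_N=\emptyset\}$, use independence and disjointness to get $\P\big(\bigcap_{t\in S}A_t\big)=(1-|S|/n)^N$, and conclude by inclusion--exclusion, grouping terms by cardinality. No gaps.
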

\begin{proof}
We use well-known formula that, for any $m$ events $A_1, \ldots, A_m$,
\begin{equation}\label{probunion}
\P\left(\bigcup_{k=1}^m A_j\right) = \sli_{k=1}^m (-1)^{k+1} \sli_{(j_1, \ldots, j_k)} \P(A_{j_1}\cap A_{j_2}\cap \cdots \cap A_{j_k}),
\end{equation}
where the integers $j_1, \ldots, j_k$ are distinct.

Let the event $A_i$ occur if the set $B_i$ does not intersect $X_N$. Then for any $k$-tuple $(j_1, \ldots, j_k)$ the event $A_{j_1}\cap \cdots \cap A_{j_k}$ occurs if the points $x_1, \ldots, x_N$ are in the complement of the union $B_{j_1}\cup \cdots \cup B_{j_k}$; i.e., $x_1, \ldots, x_N$ are in a set of measure $1-k/n$. We see that for any $k$-tuple the probability of this event is equal to $\left(1-k/n \right)^N$. Moreover, there are exactly $\binom{m}{k}$ such $k$-tuples. Therefore,
$$
\sli_{(j_1, \ldots, j_k)} \P(A_{j_1}\cap \cdots \cap A_{j_k}) =\binom{m}{k} \left(1-\frac{k}n\right)^N,
$$
and \eqref{oneisempty} follows from \eqref{probunion}.

\end{proof}

For the lower bounds in Theorems \ref{metricbelow} and \ref{manifolds} we will need the following estimate on the function $f(N,n,m)$.
\begin{lemma}
For any three numbers $0<m\leqslant n\leqslant N$, such that $m$ and $N$ are integers,
\begin{equation}\label{estimateoff}
f(N, n,m)\geqslant 1-\left[1-\left(1-\frac{1}n\right)^N\right]^m-\frac{N}{n^2}\cdot \frac{m(m-1)}2 \cdot \left(1-\frac1n\right)^{2(N-1)}\cdot \left[1+\left(1-\frac1n\right)^{N-1}\right]^{m-2}.
\end{equation}
\end{lemma}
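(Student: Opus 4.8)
The plan is to estimate $f(N,n,m)$ from below by an inclusion-exclusion argument, keeping all terms up to second order and bounding the higher-order tail. Recall from \eqref{functionf} that $f(N,n,m)=\P(\bigcup_{k=1}^m A_k)$, where $A_k$ is the event ``$B_k\cap X_N=\emptyset$''. By Bonferroni-type reasoning it is cleaner to work with the complementary event: $1-f(N,n,m)=\P(\bigcap_{k=1}^m A_k^c)$, i.e. the probability that \emph{every} $B_k$ is hit by at least one point of $X_N$. First I would write, for a single point $x_j$, the indicator that $x_j$ lands in $B_k$; since the $B_k$ are disjoint of measure $1/n$ and the $N$ points are i.i.d., $\P(A_k^c)=1-(1-1/n)^N$ and, for $k\neq l$, $\P(A_k^c\cap A_l^c)=1-2(1-1/n)^N+(1-2/n)^N$. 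The idea is then to compare $\P(\bigcap_k A_k^c)$ with the product $\prod_k \P(A_k^c)=[1-(1-1/n)^N]^m$, which accounts for the first term on the right-hand side of \eqref{estimateoff}, and to control the correlation correction by a second-moment (Janson/Chebyshev-style) bound.

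The cleanest route is the following. Let $p:=(1-1/n)^N$ and let $Y$ be the number of indices $k\in\{1,\dots,m\}$ with $B_k\cap X_N=\emptyset$, so $Y=\sum_k \mathbbm{1}_{A_k}$ and $f(N,n,m)=\P(Y\geq 1)$. Then $\P(Y=0)=1-f(N,n,m)$, and by the Paley--Zygmund / second-moment inequality in the form $\P(Y=0)\leq \mathrm{Var}(Y)/\big(\mathrm{Var}(Y)+(\E Y)^2\big)$, equivalently $f(N,n,m)=\P(Y\geq 1)\geq (\E Y)^2/\E Y^2$. Here $\E Y=mp$ and $\E Y^2 = mp + m(m-1)\,\P(A_1\cap A_2)$ with $\P(A_1\cap A_2)=(1-2/n)^N$. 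This already gives a lower bound, but not quite in the stated shape; to match \eqref{estimateoff} exactly I would instead keep the exact inclusion–exclusion/Bonferroni expansion $f=\P(\bigcup A_k)\geq S_1-S_2$ where $S_1=mp$, $S_2=\binom m2(1-2/n)^N$, and then massage $S_1-S_2$ — but that is too crude near $mp\approx 1$. The better approach, which produces precisely the RHS of \eqref{estimateoff}, is to estimate $\P(\bigcap_k A_k^c)$ directly. Condition on, or rather expand, using the identity $\mathbbm{1}_{A_k^c}=1-\mathbbm{1}_{A_k}$ inside the product $\prod_{k=1}^m(1-\mathbbm{1}_{A_k})$ and take expectations; a correlation inequality (the events $A_k$ are negatively/positively associated in a controlled way because they are decreasing functions of the point cloud) lets one bound $\P(\bigcap A_k^c)$ above by $\prod_k\P(A_k^c)$ plus an error governed by the pairwise covariances $\mathrm{Cov}(\mathbbm{1}_{A_k},\mathbbm{1}_{A_l}) = (1-2/n)^N-p^2$. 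Summing over the $\binom m2$ pairs and inserting the explicit algebraic identity $(1-2/n)^N-(1-1/n)^{2N}$, which after factoring $(1-1/n)^{2(N-1)}$ and using $(1-2/n)=(1-1/n)^2-1/n^2$ together with the bound $N/n^2$ on the ``extra'' factor, produces exactly the term
$$\frac{N}{n^2}\cdot\frac{m(m-1)}{2}\cdot\Big(1-\frac1n\Big)^{2(N-1)}\cdot\Big[1+\Big(1-\frac1n\Big)^{N-1}\Big]^{m-2}.$$

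The main obstacle — and the step that requires genuine care rather than bookkeeping — is justifying that the second-order correction term can be isolated cleanly, i.e. showing that
$$\P\Big(\bigcap_{k=1}^m A_k^c\Big)\leq \prod_{k=1}^m \P(A_k^c) + \sum_{1\le k<l\le m}\big[\P(A_k^c\cap A_l^c)-\P(A_k^c)\P(A_l^c)\big]\cdot(\text{weight})$$
with the right ``weight'' $[1+(1-1/n)^{N-1}]^{m-2}$ emerging from telescoping the product over the remaining $m-2$ coordinates. Concretely I would do this by a hybrid/telescoping argument: order the events $A_1,\dots,A_m$ and swap them one pair at a time from the product form into the joint form, at each swap picking up exactly one covariance term times a product of at most $m-2$ factors each bounded by $\P(A_j^c)+\P(A_j)=1+\text{(something}\le (1-1/n)^{N-1})$ — this uses that conditioning on $A_k$ or $A_k^c$ only reduces the effective measure available to the other balls, so the relevant conditional probabilities of $A_j^c$ are dominated by $1$ and those of $A_j$ by a quantity $\le(1-1/n)^{N-1}$ (one fewer ball's worth of room, hence exponent $N$ replaced by $\ge N-1$ after accounting for which points are already placed). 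Once this telescoping inequality is in place, the estimate \eqref{estimateoff} follows by substituting the explicit two-point probability, performing the elementary algebraic simplification of $(1-2/n)^N$ against $(1-1/n)^{2N}$, and bounding the leftover factor by $N/n^2$ via $(1-2/n)^N=(1-1/n)^{2N}\big(1-\tfrac{1}{n^2}(1-1/n)^{-2}\big)^N$ and convexity. I expect the algebraic simplification to be routine; the telescoping/coupling bound controlling the higher-order terms uniformly in $m$ is where the real work lies.
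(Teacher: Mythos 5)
Your proposal is a plan rather than a proof: the step you yourself flag as ``where the real work lies'' --- the inequality
$\P\big(\bigcap_k A_k^c\big)\leq \prod_k\P(A_k^c)+\sum_{k<l}\big[\P(A_k^c\cap A_l^c)-\P(A_k^c)\P(A_l^c)\big]\cdot(\text{weight})$
--- is never established, and the sketch of how the weight $\big[1+(1-\frac1n)^{N-1}\big]^{m-2}$ would emerge does not hold up. Your per-factor bound ``$\P(A_j^c)+\P(A_j)=1+(\text{something}\le(1-\frac1n)^{N-1})$'' is incoherent as written, since $\P(A_j^c)+\P(A_j)=1$ exactly; and a genuine Lindeberg-style swap produces terms of the form $\mathrm{Cov}\big(\prod_{k<j}\mathbbm{1}_{A_k^c},\mathbbm{1}_{A_j^c}\big)$, which involve the full joint law of $j$ occupancy events --- bounding those by inclusion--exclusion sends you straight back to the alternating sum you started from. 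Note also that the pairwise covariances here are \emph{negative} (since $(1-\frac2n)^N<(1-\frac1n)^{2N}$), so the correlation inequality you would actually want is negative association of multinomial occupancy counts; that is a true but nontrivial classical fact which you would have to prove or cite, and if you had it, it would give the cleaner bound $f\geq 1-[1-(1-\frac1n)^N]^m$ with no error term at all, making your second-order bookkeeping moot. As it stands, the central inequality is asserted, not proved.

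The paper's argument is entirely different and far more elementary: it never leaves the explicit formula \eqref{functionf}. One compares $(1-\frac kn)^N$ with $(1-\frac1n)^{kN}$ term by term, using $1-kx\le(1-x)^k\le 1-kx+\frac{k(k-1)}2x^2$ together with $a^N-(a-b)^N\le Nba^{N-1}$, taking the resulting lower bound for odd $k$ (where the sign is $+$) and the trivial upper bound for even $k$ (where the sign is $-$). The main binomial sum then collapses to $1-[1-(1-\frac1n)^N]^m$, and the accumulated error terms are summed \emph{exactly} via the identity $\sum_k\binom{m}{k}\frac{k(k-1)}2x^k=\frac{m(m-1)}2x^2(1+x)^{m-2}$ evaluated at $x=(1-\frac1n)^{N-1}$; this generating-function identity, not any conditioning or coupling, is the true source of the factor $\big[1+(1-\frac1n)^{N-1}\big]^{m-2}$. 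To repair your write-up you should either supply the negative-association input honestly or switch to this direct algebraic comparison.
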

\begin{proof}
Notice first that for $k\geqslant 1$ and $0\leqslant x \leqslant 1$ we have
$$
1-kx\leqslant (1-x)^k \leqslant 1-kx + \frac{k(k-1)}2 x^2.
$$
Thus, for $x=1/n$, we get
$$
\left(1-\frac1n\right)^k-\frac{k(k-1)}2\frac{1}{n^2} \leqslant 1-\frac{k}{n}\leqslant \left(1-\frac1n\right)^{k}
.
$$
Suppose $(1-\frac1n)^k\geqslant \frac{k(k-1)}2\frac{1}{n^2}$. Using the inequality
$$
a^N - (a-b)^N =b \cdot (a^{N-1}+(a-b)a^{N-2}+\cdots + (a-b)^{N-1})\leqslant N\cdot b \cdot a^{N-1}, \; \mbox{if $a>b>0$},
$$
we get
\begin{align}\label{brekekek}
\left(1-\frac kn\right)^N& \geqslant \left(\left(1-\frac1n\right)^k-\frac{k(k-1)}2\frac{1}{n^2}\right)^N  \\
& \geqslant \left(1-\frac1n\right)^{kN }- N\cdot \frac{k(k-1)}2\frac{1}{n^2} \cdot \left(1-\frac{1}n\right)^{k(N-1)}. \notag
\end{align}

Suppose now that $(1-\frac1n)^k< \frac{k(k-1)}2\frac{1}{n^2}$. Then
$$
\left(1-\frac1n\right)^{kN }- N\cdot \frac{k(k-1)}2\frac{1}{n^2} \cdot \left(1-\frac{1}n\right)^{k(N-1)} = \left(1-\frac1n\right)^{k(N-1)}\left(\left(1-\frac1n\right)^k - N\frac{k(k-1)}2\frac{1}{n^2}\right)<0,
$$
so as in inequality \eqref{brekekek} for $k\leqslant n$,
$$
\left(1-\frac kn\right)^N\geqslant \left(1-\frac1n\right)^{kN }- N\cdot \frac{k(k-1)}2\frac{1}{n^2} \cdot \left(1-\frac{1}n\right)^{k(N-1)}
$$
also holds.
Therefore,
\begin{multline*}
f(N,n,m)=\sli_{\text{$k$ odd, $k\leqslant m$}} \binom{m}{k} \left(1-\frac{k}n\right)^N - \sli_{\text{$k$ even, $k\leqslant m$}} \binom{m}{k}\left(1-\frac{k}n\right)^N \geqslant  \\
\sli_{\text{$k$ odd}} \binom{m}{k}\left[\left(1-\frac1n\right)^{kN }- N\cdot \frac{k\left(k-1\right)}2\frac{1}{n^2} \cdot \left(1-\frac{1}n\right)^{k\left(N-1\right)}\right] - \sli_{\text{$k$ even}} \binom{m}{k}\left(1-\frac1n\right)^{kN } \geqslant
\end{multline*}
\begin{equation}\label{koaks1}
\sli_{k=1}^m \left(-1\right)^{k+1}\binom{m}{k}\left(1-\frac1n\right)^{kN } - \frac{N}{n^2}\sli_{k=0}^m \binom{m}{k}\frac{k\left(k-1\right)}2 \cdot \left(1-\frac{1}n\right)^{k\left(N-1\right)}.
\end{equation}
The first sum in \eqref{koaks1} is equal to $1-(1-(1-\frac1n)^N)^m$. To calculate the second sum we notice that
$$
\frac{m(m-1)}{2}x^2 (1+x)^{m-2}=\frac{1}{2}x^2 ((1+x)^m)'' = \sli_{k=0}^m \binom{m}k \cdot \frac{k(k-1)}2 x^{k}.
$$
Thus, for $x=(1-\frac1n)^{N-1}$ we get
$$
\sli_{k=0}^m \binom{m}k \frac{k(k-1)}2 \cdot \left(1-\frac{1}n\right)^{k(N-1)} = \frac{m(m-1)}2 \left(1-\frac1n\right)^{2(N-1)}\cdot \left(1+\left(1-\frac1n\right)^{N-1}\right)^{m-2}.
$$
Combining the above estimates we obtain \eqref{estimateoff}.
\end{proof}
With the help of \eqref{estimateoff} we can deduce some asymptotic properties of $f(N,n,m)$ as $N\to \infty$.
\begin{lemma}\label{lemmaforf}
Let $N$ be a positive integer and $n,m$ be numbers satisfying $1\leqslant m\leqslant n\leqslant N$. Further, let $\kappa_n$ denote constants depending on $n$ such that $0<c_1\leqslant\kappa_n\leqslant c_2$ for all $n$.
\begin{enumerate}[label={\upshape(\roman*)}]
\item If $m=\left\lfloor \kappa_n n \right\rfloor$ and $c_2\leqslant 1$, then there exists a number $\a$ such that for $n= \frac{N}{\log N-\alpha \log\log N}$ we have $f(N,n,m)\to 1$ as $N\to \infty$.
\item If $d>1$ and $m=\left\lfloor \kappa_n n^{\frac{d-1}{d}}\right\rfloor$, then there exists a number $\alpha$ such that for $n= \frac{N}{\frac{d-1}d\log N-\a\log\log N}$ we have $f(N, n, m)\to 1$ as $N\to \infty$.
\item If $d>1$ and $m=\left\lfloor \kappa_n n^{\frac{1}{d}}\right\rfloor$, then there exists a number $\alpha$ such that for $n= \frac{N}{\frac{1}d\log N-\a\log\log N}$ we have $f(N, n, m)\to 1$ as $N\to \infty$.
\end{enumerate}
\end{lemma}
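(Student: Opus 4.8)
\textbf{Proof plan for Lemma \ref{lemmaforf}.} The strategy in all three parts is the same: apply the lower bound \eqref{estimateoff} and show that, for the indicated choices of $n$ and $m$, both the term $\bigl[1-(1-1/n)^N\bigr]^m$ and the ``error'' term tend to $0$ as $N\to\infty$, so that $f(N,n,m)\to 1$. Since $f(N,n,m)\le 1$ always (it is a probability by \eqref{oneisempty}), this gives the claim. The heart of the matter is a careful asymptotic analysis of $(1-1/n)^N$, for which I would use $\log(1-1/n) = -1/n - O(1/n^2)$, so that
\[
\Bigl(1-\frac1n\Bigr)^N = \exp\!\Bigl(-\frac Nn - O\bigl(\tfrac N{n^2}\bigr)\Bigr),
\]
and in each case $N/n$ is, up to lower-order corrections, a logarithmic quantity while $N/n^2 \to 0$, so the $O(N/n^2)$ correction is harmless.

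\textbf{Part (i).} Here $n = N/(\log N - \alpha\log\log N)$, so $N/n = \log N - \alpha\log\log N$ and hence $(1-1/n)^N \sim (\log N)^{\alpha}/N$. Then $1-(1-1/n)^N = 1 - (1+o(1))(\log N)^\alpha/N$, and raising to the power $m = \lfloor \kappa_n n\rfloor \le n \sim N/\log N$ gives
\[
\Bigl[1-(1-1/n)^N\Bigr]^m \le \exp\!\Bigl(-m(1-1/n)^N\Bigr) = \exp\!\Bigl(-(1+o(1))\,\kappa_n\,(\log N)^{\alpha-1}\Bigr),
\]
which tends to $0$ provided $\alpha > 1$ (using $\kappa_n \ge c_1 > 0$); this is how $\alpha$ is chosen. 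For the error term: $\tfrac N{n^2}\cdot\tfrac{m(m-1)}2 \le \tfrac N{n^2}\cdot\tfrac{n^2}2 = \tfrac N2$, while $(1-1/n)^{2(N-1)}\sim (\log N)^{2\alpha}/N^2$ and $[1+(1-1/n)^{N-1}]^{m-2}\le 2^m$ is too crude — instead bound it by $\exp(m(1-1/n)^{N-1}) = \exp(O((\log N)^{\alpha-1})) = 1+o(1)$ since $\alpha-1$ can be taken in $(0,1)$ — wait, we need $\alpha>1$ for the first term, so let me just take $1<\alpha<2$ and note $(\log N)^{\alpha-1}\to\infty$ but slowly; then $\exp(m(1-1/n)^{N-1})$ is not bounded. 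The cleaner route: write the error term as $\tfrac N2\cdot(1-1/n)^{2(N-1)}\cdot[1+(1-1/n)^{N-1}]^{m-2}$ and bound $[1+x]^{m-2}\le e^{mx}$ with $x=(1-1/n)^{N-1}$; then $mx = O((\log N)^{\alpha-1})$, so the whole error term is $O\bigl(\tfrac N2\cdot \tfrac{(\log N)^{2\alpha}}{N^2}\cdot e^{O((\log N)^{\alpha-1})}\bigr) = O\bigl(\tfrac{(\log N)^{2\alpha}e^{O((\log N)^{\alpha-1})}}{N}\bigr)$, and since $e^{O((\log N)^{\alpha-1})} = N^{o(1)}$ for $\alpha<2$, this is $N^{-1+o(1)}\to0$. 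So any fixed $\alpha\in(1,2)$ works.

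\textbf{Parts (ii) and (iii).} These are handled identically with $(d-1)/d$, respectively $1/d$, in place of $1$: with $n = N/(\beta\log N - \alpha\log\log N)$ where $\beta = (d-1)/d$ (resp. $1/d$), one gets $N/n = \beta\log N - \alpha\log\log N$, hence $(1-1/n)^N \sim (\log N)^{\alpha}/N^{\beta}$, and $m \sim \kappa_n n^{\beta} \sim \kappa_n N^{\beta}/(\beta\log N)^{\beta}$, so that
\[
m\,(1-1/n)^N \sim \frac{\kappa_n}{(\beta\log N)^\beta}\,(\log N)^\alpha = \frac{\kappa_n}{\beta^\beta}\,(\log N)^{\alpha-\beta},
\]
which $\to\infty$ as long as $\alpha>\beta$, making $[1-(1-1/n)^N]^m \le \exp(-m(1-1/n)^N)\to0$. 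The error term is estimated as before: $\tfrac N{n^2}\cdot\tfrac{m(m-1)}2 \le \tfrac N{n^2}\cdot \tfrac{m^2}2$, with $m^2/n^2 \le n^{2\beta-2} = n^{-2/d}$ (resp. $n^{2\beta - 2}$), which is $o(1)$; and $(1-1/n)^{2(N-1)}[1+(1-1/n)^{N-1}]^{m-2} \le \exp(-2N/n + O(N/n^2) + m(1-1/n)^{N-1})$ is controlled by the same computation, yielding an overall bound of the form $N^{-\delta + o(1)}$ for some $\delta>0$. One must check that $N/n^2 = (\beta\log N - \alpha\log\log N)^2/N \to 0$, which is immediate, so the $O(N/n^2)$ corrections throughout are negligible. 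Choosing any $\alpha$ with $\alpha > \beta$ (and small enough that the exponential factors remain $N^{o(1)}$, e.g. $\beta<\alpha<2\beta$ or similar) completes each part.

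\textbf{Main obstacle.} The only real delicacy is keeping the several competing factors in \eqref{estimateoff} under simultaneous control: the factor $[1+(1-1/n)^{N-1}]^{m-2}$ must be bounded by $e^{m(1-1/n)^{N-1}}$ rather than the trivial $2^{m}$, and one must verify that this exponential, together with the $(\log N)^{2\alpha}$ prefactor, is dominated by the $N^{-\beta}$ (or $N^{-2\beta}$) gain coming from $(1-1/n)^{2(N-1)}$ — this is exactly what forces an upper restriction on $\alpha$ (hence the phrase ``there exists a number $\alpha$'' in the statement). Once the right range of $\alpha$ is identified, everything reduces to the elementary estimate $\log(1-1/n) = -1/n + O(1/n^2)$ and the bound $1+x\le e^x$.
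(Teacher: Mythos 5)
Your proposal is correct and follows essentially the same route as the paper: both apply the lower bound \eqref{estimateoff}, show that $\bigl[1-(1-1/n)^N\bigr]^m=\exp\bigl(-(1+o(1))\,\kappa_n(\log N)^{\alpha-\beta}\bigr)\to 0$ once $\alpha$ exceeds the relevant exponent, bound the quadratic error term by $N^{-1+o(1)}$ after controlling $\bigl[1+(1-1/n)^{N-1}\bigr]^{m-2}$ by $e^{m(1-1/n)^{N-1}}=N^{o(1)}$ (the paper fixes $\alpha=3/2$ in part (i), i.e.\ sits in the same window $1<\alpha<2$ you identify), and conclude from $f\leqslant 1$. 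The only difference is cosmetic: you write out parts (ii) and (iii) explicitly with the parameter $\beta$, whereas the paper proves only part (i) and declares the other two similar.
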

\begin{proof}
We prove only part (i) since the proofs of the second and third parts are similar.

In what follows, to simplify the displays, we omit the symbol for the integer part. If $a_N$ and $b_N$ are two sequences of positive numbers, we write $a_N\sim b_N$ to mean $a_N/b_N\to 1$ as $N\to \infty$.

For our choice of $n$ in part (i) we have
$$
\left(1-\frac{1}{n}\right)^N \sim \exp\left(-\frac{N}n\right) \sim \frac{(\log N)^\a}{N}.
$$
Thus,
$$
\left(1-\left(1-\frac{1}{n}\right)^N\right)^{\kappa_n n} \sim  \left(1-\frac{(\log N)^\a}{N}\right)^{\frac{\kappa_n N}{\log N-\a\log\log N}}\sim \exp\left(-\frac{\kappa_n (\log N)^\a}{\log N-\a\log\log N}\right).
$$
If $\a>1$, then the last expression tends to zero. Moreover,
\begin{align*}
\frac{N}{n^2}\cdot \frac{m(m-1)}2 &\left(1-\frac1n\right)^{2(N-1)}\cdot  \left(1+\left(1-\frac1n\right)^{N-1}\right)^{m-2} \\
&\sim \frac{\kappa_n^2}2\cdot\frac{(\log N)^{2\a}}{N}\cdot \left(1+\frac{(\log N)^\a}{N}\right)^{\frac{\kappa_n N}{\log N-\a\log\log N}} \\
&\sim \frac{\kappa_n^2}2\cdot\frac{(\log N)^{2\a}}{N}\cdot \exp\left(\kappa_n(\log N)^{\a-1}\right).
\end{align*}
For $\a=3/2$ (actually, any $0<\a<2$ will work) the last expression is comparable to
$$
\frac{(\log N)^3}{N}\exp(\kappa_n(\log N)^{\frac12}),
$$
which tends to zero as $N$ tends to infinity.
Thus from \eqref{estimateoff} we deduce that $\liminf_{N\to \infty} f(N,n,m) \geqslant~ 1$. However, since $f(N,n,m)$ is equal to a certain probability, we have that $f(N, n, m)\leqslant ~1$, and so $\lim_{N\to \infty}f(N,n,m)=1$.
\end{proof}

\section{Proofs}\label{proofs}
\subsection{Preliminary objects}\label{sectionprelim}
Fix a compact set $\X_0$ with a metric $m$. For any large positive number $n$ let $\mE_n(\X_0)$ be a maximal set of points such that for any $y,z\in \mE_n$ we have $m(y,z)\geqslant 1/n$. Then for any $x\in \X_0$ there exists a point $y\in \mE_n$ such that $m(x,y)\leqslant 1/n$ (otherwise we can add $x$ to $\mE_n$, which contradicts its maximality).
In what follows we will clearly indicate the set $\X_0$, and then just write $\mE_n$.
\subsection{Proof of the Theorem \ref{metricabove}}
Recall that $(\X, m)$ is a metric space, and $B(x,r)$ denotes a closed ball (in the metric $m$) with center $x\in \X$ and radius $r$. Put $\mE_N:=\mE_N(\X)$ and
note that
\begin{equation}\label{cardbound}
\mu(\X)\geqslant \sli_{x\in \mE_n}\mu\left(B\left(x, \frac{1}{3n}\right)\right) \geqslant \card(\mE_n) \Phi(1/(3n)). 
\end{equation}
Suppose now that $X_N=\{x_1, \ldots, x_N\}$ is a set of $N$ random points, independently distributed over $\X$ with respect to the measure $\mu$. We denote its covering radius by
$$
\rho(X_N):=\rho(X_N, \X).
$$
Suppose $\rho(X_N)> \frac{2}n$. Then there exists a point $y\in \X$ such that $X_N\cap B\left(y, \frac{2}n\right)=\emptyset$. Choose a point $x\in \mE_n$ such that $m(x,y)<\frac{1}n$. Then $B(x, \frac{1}{n})\subset B(y, \frac{2}n)$, and so the ball $B(x, \frac{1}{n})$ (and thus $B(x, \frac{1}{3n})$) does not intersect $X_N$. Therefore,
\begin{align}\label{aboveprobbb}
\P\left(\rho(X_N)\geqslant \frac{2}{n}\right) &\leqslant \P\left(\exists x\in \mE_n\colon B(x, 1/(3n))\cap X_N=\emptyset\right)  \\
& \leqslant \card(\mE_n)\cdot \left(1- \frac{\Phi(\frac1{3n})}{\mu(\X)}\right)^N. \notag 
\end{align}
We now choose $n$ to be such that $\frac{1}{3n}=\Phi^{-1}(\frac{\alpha \log N}{N})$. There exists such an $n$ since $\Phi$ is continuous and $\Phi(r)\to 0$ as $r\to 0^+$. Then utilizing the upper bound for $\card(\mE_n)$ from \eqref{cardbound}, we deduce that for some $C>0$ we have
$$
\P\left[\rho(X_N)\geqslant \frac{2}n\right] \leqslant C \frac{N}{\log N}\cdot N^{-C\alpha},
$$
which concludes the proof of the estimate \eqref{metricaboveprob}.

To establish the estimate \eqref{metricaboveexpect}, notice that since for small values of $r$ we have $\Phi(r)\leqslant r^\sigma$, it follows that  for small $r$ and $D=\frac1\sigma$ we have $\Phi^{-1}(r)\geqslant r^D$. Choose $\alpha$ so large that $N^{1-C\alpha}=o(N^{-D})$ as $N\to \infty$. Then
$$
\E\rho(X_N) \leqslant \frac{2}{n} + C\diam(\X)\cdot o(N^{-D}) =6\Phi^{-1}(\frac{\alpha \log N}{N})+o(N^{-D}).
$$
Finally, since $\Phi^{-1}(\frac{\alpha\log N}{N})\geqslant \Phi^{-1}(N^{-1}) \geqslant N^{-D}$, inequality \eqref{metricaboveexpect} follows. \hfill $\Box$

\subsection{Proof of the Theorem \ref{metricbelow}}\label{proofofmetricbelow}
Let $\mE_n:=\mE_n(\X_1)$, where $\X_1$ is as in the hypothesis. 
Notice that
\begin{equation}\label{cardbound2}
0<\mu(\X_1) \leqslant \sli_{x\in \mE_n} \mu\left(B(x, \frac{1}{n})\right) \leqslant \card(\mE_n)\Phi\left(\frac{1}n\right).
\end{equation}
An estimate as in \eqref{cardbound} together with the doubling property of $\Phi$ imply that
$$
\mu(\X)\geqslant c \cdot \card(\mE_n) \Phi\left(\frac1{3n}\right) \geqslant \tilde{c} \cdot \card(\mE_n)\Phi\left(\frac1n\right).
$$
Thus, $\tau_n:=\card(\mE_n)\cdot \Phi(1/n)$ satisfies $0<c_1<\tau_n<c_2$ for some constants $c_1$ and $c_2$ independent of $n$.
Clearly if a ball $B(x, \frac{1}{3n})$ does not intersect $X_N$, then $\rho(X_N)=\rho(X_N, \X)\geqslant \frac{1}{3n}$. Thus
$$
\P\left(\rho(X_N)\geqslant \frac{1}{3n}\right)\geqslant \P\left(\exists x\in \mE_n\colon B(x, 1/(3n))\cap X_N = \emptyset \right).
$$
Notice that the balls $B(x, \frac{1}{3n})$ are disjoint for $x\in \mE_n$, and their $\mu$-measure is comparable to $t:=\Phi(\frac{1}{n})$.

Next we claim that for every $x\in \mE_n$ there exists a constant $c_x\leqslant 1$ such that the balls $B(x, c_x \frac{1}{3n})$ have the same measure $c_0 \Phi(\frac{1}{n})=c_0 t$, and moreover that the uniform estimate $c_x > c>0$ holds for some constant $c$.
To see this, take two points $x_1, x_2\in \X_1$ and assume that the balls $B_i := B(x_i, r)$, $i=1,2$ are disjoint. Suppose $\mu(B_1)<\mu(B_2)$. Define the function $\varphi(s):=\mu(B(x_2, s\cdot r))$. The strict doubling property of $\Phi$ implies
$$
\mu(B_1)\geqslant c\Phi(r) \geqslant c\cdot C_1^k \Phi(r/2^k).
$$
Choose $k$ such that $c\cdot C_1^k \geqslant 1$. Then
$$
\mu(B_1)\geqslant \Phi(r/2^k) \geqslant \varphi(2^{-k}).
$$
Thus, $\varphi(2^{-k})\leqslant\mu(B_1)<\mu(B_2)=\varphi(1)$. By continuity of $\varphi$ we see that there exists a constant $c_{x_2}$ such that $\mu(B(x_2, c_{x_2}r))=\mu(B(x_1, r))$. Notice that $c_{x_2} \geqslant 2^{-k}=:c_0$, where $k$ depends only on the constants $c, C_1$ from Theorem \ref{metricbelow} and not on $x_1, x_2$, or $r$. Applying this procedure to all balls $B(x, 1/(3n))$, $x\in \mE_n$, and using the fact that $\card(\mE_n)=\tau_n /t$, we obtain
\begin{align}\label{blahblahblah}
\P\left(\rho(X_N)\geqslant \frac{c_0}3 \Phi^{-1}(t)\right)&\geqslant \P\left(\mbox{one of $\frac{\tau_n}t$ disjoint balls of measure $c_0t$ is disjoint from $X_N$}\right) \notag
\\&= f(N, \frac{1}{c_0t}, \frac{\tau_n}t) = f(N, \frac1{c_0t}, \frac{\kappa_n}{c_0t}),
\end{align}
where $\kappa_n:=c_0\tau_n$ and $f$ is given in \eqref{functionf}. If necessary, we can decrease the size of $c_0$ so that $\kappa_n \leqslant 1$ for $n$ large.
As we have seen in Lemma \ref{lemmaforf}(i), there exists a number $\a$ such that if
$$\frac1{c_0t}= \frac{N}{\log N-\a\log\log N},$$
then $f(N, \frac{1}{c_0t}, \frac{\kappa_n}{c_0t})\to 1$ as $N\to \infty$.
Thus, for any sufficiently large number $N$ we have
$$
\P\left(\rho(X_N)\geqslant \frac{c_0}3 \Phi^{-1}(\frac{\log N-\a\log\log N}{c_0 N})\right)\geqslant 1-o(1), \; N\to \infty,
$$
which is the desired inequality \eqref{metricbelowprob}.

Moreover, for large values of $N$ we have $\log N-\a\log\log N\geqslant \frac{1}2\log N$; thus
$$
\E \rho(X_N)\geqslant c_1 \Phi^{-1}(c_2 \frac{\log N}N),
$$
which proves inequality \eqref{metricbelowexpect}. \hfill $\Box$

\subsection{Estimates from above for asymptotically flat sets}\label{abovvve}
Let $K$ be an asymptotically flat $s$-regular subset of $\R^d$ and put
$$
\rho(X_N)=\rho(X_N, K), \qquad \ep_N:=\frac{1}{\log N}.
$$
In order to deduce sharp asymptotic results we first improve our estimates from above by considering a better net of points.
For each $N>4$ let $\mE_{n/\ep_N}:=\mE_{n/\ep_N}(K)$.
From estimates similar to \eqref{cardbound} and \eqref{cardbound2} we see that $\card(\mE_n)$ is comparable to $\left(n/\ep_N\right)^s$ independently of $N$.

Suppose $\rho(X_N)> \frac{1}{n}$. Then, since $K$ is compact, for some $y\in K$ we have $B_d(y,\frac{1}{n})\cap X_N=\emptyset$, and thus there exists a point $x\in \mE_{n/\ep_N}$ such that $B_d(x, \frac{1-\ep_N}{n})\cap X_N=\emptyset$. We fix a number $\delta$, $0<\delta<1$, and take $n$ so large that
$$
\mathcal{H}_s\left(B_d(x, \frac{1-\ep_N}n)\cap K\right)\geqslant (1-\delta)\upsilon_s \frac{(1-\ep_N)^s}{n^s}
\geqslant (1-\delta)\upsilon_s \frac{1-s\ep_N}{n^s}.
$$
As in \eqref{aboveprobbb},
\begin{equation}\label{probmanifoldabove}
\P\left(\rho(X_N)> \frac{1}{n}\right)\leqslant C  \left(\frac{n}{\ep_N}\right)^s \left(1-\frac{1}{\mathcal{H}_s(K)}(1-\delta)\upsilon_s\frac{1-s\ep_N}{n^s}\right)^N.
\end{equation}
Fix a number $A>0$ and choose
$$
n_1 := \left(\frac{(1-\delta)\upsilon_s}{\mathcal{H}_s(K)} \frac{N}{\log N+A\log\log N}\right)^{1/s}.
$$
Then with $n=n_1$ in \eqref{probmanifoldabove} we get for all $N$ large,
\begin{equation}\label{loglog}
\P\left(\rho(X_N)> \frac{1}{n_1}\right) \leqslant C  \cdot N (\log N )^{s-1} e^{-(1-s/\log N)(\log N+A\log\log N)}.
\end{equation}
Recall that $C$ does not depend on $N$. Thus if $A$ and $N$ are sufficiently large, it follows that
\begin{equation}\label{problessthanlog}
\P\left(\rho(X_N)> \frac{1}{n_1}\right) \leqslant \frac{1}{\log N}.
\end{equation}
Furthermore, if we plug $n=n_2 := \left(\frac{N}{B\log N}\right)^{1/s}$ in \eqref{probmanifoldabove} we get for sufficiently large $B$
\begin{equation}\label{problessthanpower}
\P\left(\rho(X_N)> \frac{1}{n_2}\right) \leqslant N^{-p/s - 1}.
\end{equation}

With $\textup{d}\mu=\mathbbm{1}_K \textup{d}\H_s/\H_s(K)$, we make use of the formula
\begin{multline}\label{superest}
\E[\rho(X_N)^p] = \int_{K^N} \rho(X_N)^p \textup{d}\mu(x_1)\ldots \textup{d}\mu(x_N) = \ili_{\rho(X_N)\leqslant1/n_1} \rho(X_N)^p\textup{d}\mu(x_1)\ldots \textup{d}\mu(x_N) + \\ \ili_{1/n_1 < \rho(X_N)\leqslant 1/n_2} \rho(X_N)^p\textup{d}\mu(x_1)\ldots \textup{d}\mu(x_N) + \ili_{\rho(X_N)>  1/n_2}\rho(X_N)^p\textup{d}\mu(x_1)\ldots \textup{d}\mu(x_N) \leqslant \\ \frac1{n_1^p} + \frac{1}{n_2^p}\cdot \P\left(\rho(X_N)> \frac{1}{n_1}\right) + (\textup{diam}(K))^p \cdot \P\left(\rho(X_N)> \frac1{n_2}\right).
\end{multline}
From \eqref{problessthanlog}, \eqref{problessthanpower}, and the definitions of $n_1$ and $n_2$, we obtain
\begin{multline}\label{superest2}
\E[\rho(X_N)^p] \leqslant \left(\frac{\log N+A\log\log N}{N}\right)^{p/s} \cdot \left(\frac{\mathcal{H}_s(K)}{\upsilon_s}\right)^{p/s} \cdot (1-\delta)^{-p/s} + C\left(\frac{\log N}{N}\right)^{p/s} \frac{1}{\log N}  \\ +CN^{-p/s-1}.
\end{multline}
Therefore, for any $\delta$ with $0<\delta<1$,
$$
\limsup_{N \to  \infty} \E[\rho(X_N)^p]\cdot \left(\frac{N}{\log N}\right)^{p/s} \leqslant (1-\delta)^{-p/s}\cdot \left(\frac{\mathcal{H}_s(K)}{\upsilon_s}\right)^{p/s},
$$
and consequently
\begin{equation}\label{flatabovelastformula}
\limsup_{N \to  \infty} \E[\rho(X_N)^p] \cdot \left(\frac{N}{\log N}\right)^{p/s} \leqslant  \left(\frac{\mathcal{H}_s(K)}{\upsilon_s}\right)^{p/s}.
\end{equation} \hfill $\Box$
\subsection{Estimate from above for quasi-nice sets}\label{abovvvve}
Let $K$ be a quasi-nice $s$-regular subset of $\R^d$, and again set $\ep_N:=1/\log N$ and $\mE_{n/\ep_N}:=\mE_{n/\ep_N}(K)$, where $n/\ep_N\to \infty$ as $N\to \infty$. Since the set $T$ from part (iii) of Definition \ref{flat} is finite, the regularity condition (ii) implies
$$
\mathcal{H}_s\left(\bigcup_{x\in T}B_d(x, r)\right) \leqslant C \cdot \card(T) \cdot r^s = C_1 r^s, \; \; 0<r<r_0.
$$
Suppose $y_1, \ldots, y_k \in \mE_{n/\ep_N} \cap \bigcup_{x\in T}B_d(x, \frac{1-\ep_N}n)$. Then the balls $B_d(y_j, \frac{\ep_N}{3n})$ are disjoint and $B_d(y_j, \frac{\ep_N}{3n})\subset \bigcup_{x\in T}B_d(x, \frac{1+\ep_N}n)$ for $j=1,\ldots,k$. The chain of inequalities
$$
C_1\left(\frac{1+\ep_N}n\right)^s \geqslant \mathcal{H}_s\left(\bigcup_{x\in T}B_d(x,  \frac{1+\ep_N}n)\right)\geqslant \sli_{j=1}^k \mathcal{H}_s\left(B_d(y_j, \frac{\ep_N}{3n})\right) \geqslant c\cdot k\cdot (\frac{\ep_N}n)^s
$$
implies that $k\leqslant C_2/\ep_N^s$, and $C_2$ does not depend on $N$. Further, if $y\in \mE_{n/\ep_N}\setminus\bigcup_{x\in T}B_d(x, \frac{1-\ep_N}n)$, then $\H_s\left(B_d(y, \frac{1-\ep_N}n)\right)\geqslant \upsilon_s \left(\frac{1-\ep_N}n\right)^s$.

As we have seen in \eqref{probmanifoldabove}, $\P(\rho(X_N)> 1/n)$ is bounded from above by the probability that for some $y\in \mE_{n/\ep_N}$ we have $B_d\left(y, \frac{1-\ep_N}n\right)\cap X_N=\emptyset$. Taking into account that $\card(\mE_{n/\ep_N}) \leqslant C_3(n/\ep_N)^s$, we obtain
\begin{multline*}
\P\left(\rho(X_N)> \frac1n\right) \leqslant \P\bigg(\mbox{one of $\leqslant\frac{C_2}{\ep_N^s}$ balls of measure $\geqslant\frac{c_1}{n^s}$ is disjoint from $X_N$ or} \\ \mbox{one of $\leqslant C_3\left(\frac{n}{\ep_N}\right)^s$ balls of measure $\geqslant\frac{\upsilon_s(1-\ep_N)^s}{n^s}$ is disjoint from $X_N$}\bigg).
\end{multline*}
This last probability is bounded from above by
$$
\frac{C_2}{\ep_N^s}\left(1-\frac{c_1}{n^s}\right)^N + C_4\left(\frac{n}{\ep_N}\right)^s\left(1-\frac{1}{\H_s(K)}\frac{\upsilon_s (1-\ep_N)^s}{n^s}\right)^N.
$$
As in the preceding proof, if
$$
n_1=\left(\frac{\upsilon_s}{\mathcal{H}_s(K)} \frac{N}{\log N+A\log\log N}\right)^{1/s},
$$
then, for $N$ large,
$$
C_4\left(\frac{n_1}{\ep_N}\right)^s\cdot\left(1-\frac{1}{\H_s(K)}\frac{\upsilon_s (1-\ep_N)^s}{n_1^s}\right)^N \leqslant \frac{C_5}{\log N}.
$$
Furthermore notice that if $C_6$ is sufficiently large, then
$$
\frac{C_2}{\ep_N^s}\left(1-\frac{c_1}{n_1^s}\right)^N \leqslant C_6(\log N)^s N^{-c_2}, \;\; N\to \infty.
$$
Repeating estimates \eqref{superest} and \eqref{superest2}, we obtain
\begin{equation}\label{lastinquasinice}
\limsup_{N \to  \infty} \E[\rho(X_N)^p] \cdot \left(\frac{N}{\log N}\right)^{p/s} \leqslant  \left(\frac{\mathcal{H}_s(K)}{\upsilon_s}\right)^{p/s}.
\end{equation} \hfill $\Box$

Note that \eqref{lastinquasinice} holds whether or not $K$ is countably $s$-rectifiable; it requires only that properties (ii) and (iii) of Definition \ref{flat} hold.

\subsection{Estimate from below for quasi-nice sets}\label{sectionfrombelow}
For the proof of Theorem \ref{manifolds}, it remains in view of inequalities \eqref{flatabovelastformula} and \eqref{lastinquasinice}, to establish
\begin{equation}\label{liminfbigger}
\liminf_{N \to  \infty} \E[\rho(X_N)^p] \cdot \left(\frac{N}{\log N}\right)^{p/s} \geqslant  \left(\frac{\mathcal{H}_s(K)}{\upsilon_s}\right)^{p/s}
\end{equation}
for asymptotically flat and quasi-nice $s$-dimentional manifolds $K$. Since by the H\"older inequality we have
$$
\liminf_{N\to \infty}\E[\rho(X_N)^p] \cdot \left[\frac{N}{\log N}\right]^{p/s} \geqslant \left(\liminf_{N\to \infty}\E \rho(N_N)\cdot \left[\frac{N}{\log N}\right]^{1/s}\right)^p,
$$
it is enough to prove \eqref{liminfbigger} for $p=1$.
If $K$ is quasi-nice, then $K$ is countably $s$-rectifiable ($s$ is an integer) and $0<\mathcal{H}_s(K)<\infty$; thus as previously remarked, the following holds for $\H_s$-almost every point $x\in K$:
$$
r^{-s}\cdot \mathcal{H}_s(B_d(x,r)\cap K)\to \upsilon_s, \;\; r\to 0^+.
$$
Fix a number $\delta$ with $0<\delta<1$ and define $r_n:=1/n$ and $q_n:=\left(\frac{1-\delta}{1+\delta}\right)^{1/s}\cdot 1/n$, where $\{n\}$ is a given countable sequence tending to infinity.
By Egoroff's theorem, there exists a set $K_1=K_1(\delta)\subset K$ with $\mathcal{H}_s(K_1)>\frac{1}{2}\mathcal{H}_s(K)$ on which the above limit is uniform for radii $r$ equal to $r_n$ and $q_n$. That is,
\begin{equation}\label{uniforr}
r^{-s}\mathcal{H}_s(B_d(x,r)\cap K)\rightrightarrows \upsilon_s, \;\; r=r_n \; \mbox{or} \; r=q_n, \;\; n\to \infty.
\end{equation}
This means that there exists a large number $n(\delta)$, such that for any $n>n(\delta)$ we have, for every $x\in K_1$,
\begin{align}
&(1-\delta)\upsilon_s r_n^s \leqslant \mathcal{H}_s(B_d(x,r_n)\cap K) \leqslant (1+\delta)\upsilon_sr_n^s, \label{tilitili1} \\
&(1-\delta)\upsilon_sq_n^s \leqslant \mathcal{H}_s(B_d(x,q_n)\cap K) \leqslant (1+\delta)\upsilon_s q_n^s = (1-\delta)\upsilon_s r_n^s. \label{tilitili2}
\end{align}

Recalling the notation of Section \ref{sectionprelim}, we set $\mE_{n/2}:=\mE_{n/2}(K_1)$. Then, as in the preceding sections, there exist positive constants $c_1$ and $c_2$ (independent of $n$) such that $c_1 n^s \leqslant \card(\mE_{n/2}) \leqslant c_2 n^s$ where, for the lower bound, we use
$$
0<\H_s(K_1)\leqslant \H_s\left(\bigcup_{x\in \mE_{n/2}} (B_d(x, 2/n)\cap K)\right) \leqslant C\cdot \card(\mE_{n/2})(2/n)^s.
$$ Thus, $\tau_n:=\card(\mE_{n/2})/n^s$ satisfies $0<c_1\leqslant \tau_n \leqslant c_2$. Clearly, if for some $x\in \mE_{n/2}$ the ball $B_d(x, \frac1n)$ is disjoint from $X_N$, then $\rho(X_N)\geqslant \frac 1n$.
Thus, for a given $\delta>0$ and sufficiently large $n$ we have a family $\{B_d(x, 1/n)\cap K \colon x\in \mE_{n/2}(K_1)\}$ of $\tau_n n^s$ balls (relative to $K$) with disjoint interiors of radius $1/n$ and $\H_s$-measure between $(1-\delta)\upsilon_s/n^s$ and $(1+\delta)\upsilon_s /n^s$.
For a fixed $x\in \mE_{n/2}(K_1)$, define $\varphi(s):=\H_s(B(x, s/n)\cap K)$. Then $\varphi(1)\geqslant (1-\delta)\upsilon_s/n^s$. On the other hand, inequalities \eqref{tilitili2} imply
$$
\varphi\left(\left(\frac{1-\delta}{1+\delta}\right)^{1/s}\right) \leqslant (1-\delta)\upsilon_s/n^s.
$$

\noindent Thus, there is a number $c_x=c_{x,n}$, with $c_x\geqslant (\frac{1-\delta}{1+\delta})^{1/s}$, such that $\varphi(c_x)=(1-\delta) \upsilon_s/n^s$.
That is, there exists a new family $\{B_d(x, c_x/n)\cap K\colon x\in \mE_{n/2}(K_1)\}$, with $c_x \geqslant (\frac{1-\delta}{1+\delta})^{1/s}$, and the sets $B_d(x, c_x/n)\cap K$ all have the same $\H_s$ measure, namely $(1-\delta)\upsilon_s/n^s$.

As in \eqref{blahblahblah}, it follows that
\begin{align}\label{onemoreestimate}
\P\left(\rho(X_N)\geqslant \left(\frac{1-\delta}{1+\delta}\right)^{1/s}\frac1{n}\right) &\geqslant f\left(N, \frac{\mathcal{H}_s(K)n^s}{(1-\delta)\upsilon_s}, \tau_n n^s\right)\\ &= f\left(N, \frac{\mathcal{H}_s(K)n^s}{(1-\delta)\upsilon_s}, \kappa_n \cdot\frac{\mathcal{H}_s(K)n^s}{(1-\delta)\upsilon_s}\right),\notag
\end{align}
where
$$
\kappa_n:=\tau_n \cdot \frac{(1-\delta)\upsilon_s}{\mathcal{H}_s(K)}.
$$
It is easily seen that
$$
\H_s(K)\geqslant \tau_n n^s \cdot \frac{(1-\delta) \upsilon_s}{n^s} = \H_s(K)\kappa_n;
$$
thus $\kappa_n \leqslant 1$.
Part (i) of Lemma \ref{lemmaforf} therefore implies that the sequence in \eqref{onemoreestimate} tends to $1$ as $N\to \infty$ if (for suitable $\a$) we have
$$
\frac{(1-\delta) \upsilon_s}{\mathcal{H}_s(K)n^s} = \frac{\log N-\a\log\log N}{N},
$$
which is equivalent to
\begin{equation}\label{newdefinofn}
n := \left[\frac{(1-\delta) \upsilon_s}{\mathcal{H}_s(K)}\cdot \frac{N}{\log N-\a\log\log N}\right]^{1/s}.
\end{equation}
We take $N$ so large that $n$ exceeds $n(\delta)$, which ensures that the inequalities \eqref{tilitili1}--\eqref{tilitili2} hold.
From \eqref{onemoreestimate} we obtain
$$
\E\rho(X_N)\geqslant \left(\frac{1-\delta}{1+\delta}\right)^{1/s}\frac{1}{n} \cdot f\left(N, \frac{\mathcal{H}_s(K)n^s}{(1-\delta)\upsilon_s}, \tau_n n^s\right).
$$
Using the definition of $n$ in \eqref{newdefinofn}, we get
\begin{multline}\label{manifbelowlastformula}
\E \rho(X_N) \cdot  \left[\frac{N}{\log N}\right]^{1/s} \geqslant \\ \left[\frac{N}{\log N}\right]^{1/s}\cdot \left(\frac{1-\delta}{1+\delta}\right)^{1/s} \cdot \left[\frac{\mathcal{H}_s(K)}{(1-\delta) \upsilon_s}\cdot \frac{\log N-\a\log\log N}{N}\right]^{1/s}  \cdot f\left(N, \frac{\mathcal{H}_s(K)n^s}{(1-\delta)\upsilon_s}, \tau_n n^s\right),
\end{multline}
and passing to the $\liminf$ as $N\to \infty$ yields
$$
\liminf_{N \to  \infty} \E\rho(X_N) \cdot \left(\frac{N}{\log N}\right)^{1/s} \geqslant \left(\frac{1}{1+\delta}\right)^{1/s} \cdot \left[\frac{\mathcal{H}_s(K)}{\upsilon_s}\right]^{1/s}.
$$
Recalling that $\delta$ can be taken arbitrarily small, we obtain \eqref{liminfbigger} for quasi-nice sets. For asymptotically flat sets the same (but even simpler) argument applies.
\hfill $\Box$
\subsection{Proof of Corollary \ref{maehara}}
Recall that
$$
Z_N=\rho(X_N, \mathbb{S}^d)\cdot \left(\frac{\upsilon_d}{(d+1)\upsilon_{d+1}}\cdot \frac{N}{\log N}\right)^{1/d}.
$$
Corollary \ref{corsphere} implies that $\E Z_N \to 1$ and $\E[Z_N^2]\to 1$; thus $\E[(Z_N-1)^2] = \E[Z_N^2]-2\E Z_N + 1 \to 0$. The Chebyshev inequality then implies
$$
\P(|Z_N-1|>\ep) \leqslant \frac{\E[(Z_N-1)^2]}{\ep^2} \to 0,
$$
which completes the proof.
\hfill $\Box$

\subsection{Proof of the Corollaries \ref{corsphere} and \ref{corcurve}}
It is well known that a closed $C^{(1,1)}$ manifold is an asymptotically flat set, and a rectifiable curve is a quasi-nice $1$-dimensional set.
For the first fact, we refer the reader to a textbook on Riemannian geometry, for instance, \cite[Chapters 5--10]{BurIv}. The second fact can be deduced from \cite[Section 3.2]{Fal}.
\subsection{Proof of the Theorem \ref{theoremball}: estimate from above}
The proof of the theorem is similar to the proof for asymptotically flat sets. However, we need to take into account that the limit \eqref{uniform} is not equal to $\upsilon_d$ for points on the boundary. We use properties (ii) and (iii) of $K$ to obtain
\begin{align}
&r^{-d}\H_d(B_d(x,r)\cap K) \rightrightarrows \frac12 \upsilon_d, \; \; r\to 0, \;\; x\in \partial K;  \label{fact1}\\
& x\in K, \; \textup{dist}(x, \partial K)>r \Rightarrow \H_d(B_d(x,r)\cap K)=\H_d(B_d(x,r))=\upsilon_d r^d;  \label{fact2} \\
& \forall \delta>0 \; \exists r(\delta)>0 \colon \forall r<r(\delta), \forall x\in K\colon \H_d(B_d(x,r)\cap K)\geqslant (\frac12-\delta)\upsilon_d r^d. \label{fact3}
\end{align}
For the details, we refer the reader to Lee, \cite[Chapter 5]{L}
For large $N$, set $\mE_{n/\ep_N}:=\mE_{n/\ep_N}(K)$ and $\ep_N:=1/\log N$, where $n(N)$ is a sequence such that $n\asymp (N/\log N)^{1/d}$. We now fix a number $\delta$ with $0<\delta<1/2$.
Notice that if $x\in \mE_{n/\ep_N}$ and $\textup{dist}(x, \partial K) > (1-\ep_N)/n$, then
$$\H_d(B_d(x,(1-\ep_N)/n)\cap K)=\upsilon_d \left((1-\ep_N)/n\right)^d;$$ if $x\in \mE_{n/\ep_N}$ and $\textup{dist}(x, \partial K) \leqslant (1-\ep_N)/n$ then, for large enough $n$,
$$
\H_d(B_d(x, (1-\ep_N)/n)\cap K)\geqslant (\frac12-\delta)\upsilon_d ((1-\ep_N)/n)^d.
$$
On considering disjoint balls (relative to $K$) of radius $\ep_N/(3n)$ and using that
$$
\H_d(\{x\colon \textup{dist}(x, \partial K)\leqslant (1-\frac23\ep_N)/n\}) \leqslant C_1/n,
$$
we deduce, as in \eqref{cardbound}, that
$$
\card\left\{x\in \mE_{n/\ep_N}\colon \dist(x, \partial K)\leqslant \frac{1-\ep_N}n\right\} \leqslant C_2\frac{n^{d-1}}{\ep_N^d}.
$$
Therefore, for large enough $n$, we get
\begin{multline}\label{ballabovelong}
\P\left(\rho(X_N)> \frac1n\right) \leqslant \P\left(\exists x\in \mE_{n/\ep_N}\colon B_d\left(x, \frac{1-\ep_N}n\right)\cap K \cap X_N =\emptyset\right) \leqslant \\
C_2\frac{n^{d-1}}{\ep_N^d} \left(1- \frac{(1/2-\delta)\upsilon_d}{\H_d(K)}\left(\frac{1-\ep_N}{n}\right)^d\right)^N + C_3\frac{n^d}{\ep_N^d} \left(1- \frac{\upsilon_d}{\H_d(K)} \left(\frac{1-\ep_N}{n}\right)^d\right)^N.
\end{multline}

Repeating the estimates \eqref{problessthanlog}--\eqref{flatabovelastformula} with
$$
n_1:=\left( \frac{(1/2-\delta)\upsilon_d}{\H_d(K)} \cdot \frac{N}{\frac{d-1}d \log N + A\log\log N}\right)^{1/d},
$$
and
$$
n_2:=\left(\frac{N}{B\log N}\right)^{1/d},
$$
where $A$ and $B$ are sufficiently large, we obtain, after letting $\delta\to 0^+$, the estimate
$$
\limsup_{N\to \infty}\E[\rho(X_N)^p] \left( \frac{N}{\log N}\right)^{p/d} \leqslant \left(\frac{2(d-1)}d \cdot \frac{\H_d(K)}{\upsilon_d}\right)^{p/d}.
$$\hfill $\Box$
\subsection{Proof of the Theorem \ref{theoremball}: estimate from below}\label{sectionballbelow}
We repeat the proof from the Section \ref{sectionfrombelow}, but now we will place our net $\mE$ only on the boundary $\partial K$. Namely, put $\mE_{n/2}:=\mE_{n/2}(\partial K)$. Since $\partial K$ is a smooth $d-1$-dimensional submanifold, we see that $\card(\mE_{n/2})=\tau_n n^{d-1}$ with $0<c_1<\tau_n <c_2$. Moreover, from \eqref{fact1} we obtain as in \eqref{uniforr} that
$$
r^{-d}\H_d(B_d(x,1/n)\cap K) \rightrightarrows \frac12 \upsilon_d/n^d, \; \; r=r_n \; \mbox{or} \; r=q_n, \; n\to \infty,
$$
uniformly for $x\in \mE_{n/2}$.

The remainder of the proof just involves repeating the estimates \eqref{onemoreestimate}--\eqref{manifbelowlastformula}, using part (ii) of Lemma \ref{lemmaforf}.
\hfill $\Box$
\subsection{Estimate from above for the cube $[0,1]^d$}
The proof is similar to the case of the bodies with smooth boundary. The only change we need to make is to the formula \eqref{fact1}. Namely, if a point $x$ lies on a $(d-k)$-dimensional edge of the cube, then $\H_d(B_d(x,r)\cap [0,1]^d) \asymp 2^{-k}\upsilon_d r^d$. Moreover, $\H_d(B_d(x,r)\cap [0,1]^d) = 2^{-k}\upsilon_d r^d$ for points $x$ on the $(d-k)$-dimensional edge that are at distance larger than $r$ from all $(d-k-1)$-dimensional edges. Thus, if we consider a set $\mE_{n/\ep_N}:=\mE_{n/\ep_N}([0,1]^d)$, we have for any $k=0,\ldots, d$ at most $C_k n^{d-k}/\ep_N^d$ points $x\in \mE_{n/\ep_N}$ with $\H_d(B_d(x, (1-\ep_N)/n)\cap [0,1]^d)\geqslant 2^{-k}\upsilon_d((1-\ep_N)/n)^d$ . In particular, if $k=d$ we have only finitely many such points $x\in \mE_{n/\ep_N}$; and if $k=d-1$, we have no more than $Cn/\ep_N^d$ such points. We now repeat the estimates \eqref{problessthanlog}--\eqref{flatabovelastformula} and \eqref{ballabovelong} with
$$
n_1 := \left(2^{-(d-1)}\cdot d\cdot\upsilon_d\cdot \frac{N}{\log N+A\log\log N}\right)^{1/d}.
$$\hfill $\Box$
\subsection{Estimate from below for the cube $[0,1]^d$}
The proof is almost identical to the proof in the Section \ref{sectionballbelow}; the only difference is that now we take $\mE_{n/2}:=\mE_{n/2}(L)$, where $L$ is a $1$-dimensional edge of the cube $[0,1]^d$. To complete the analysis we appeal to part (iii) of Lemma \ref{lemmaforf}. \hfill $\Box$
\subsection{Estimates for a polyhedron in $\R^3$}
The estimates here are the same as for the unit cube $[0,1]^d$. The only difference is that, for points $x\in L$, where $L$ is the edge where two faces intersect at angle $\theta$, we have, if $x$ is far enough from the vertices of $P$:
$$
\H_3(B(x,r)\cap P) = \frac{\theta}{2\pi} \cdot \upsilon_3 \cdot r^3.
$$
Consequently, for $k=0,1,2,3$ we have at most $a_k n^{3-k}/\ep_N^3$ points $x\in \mE_{n/\ep_N}(P)$ with $\H_3(B_3(x, (1-\ep_N)/n)\cap P)\geqslant c_k\upsilon_3((1-\ep_N)/n)^3$, where $a_0=1$, $a_1=1/2$, and $a_2=\theta/(2\pi)$.
In the case $\theta\leqslant \pi/2$, one needs to choose
$$
n_1:=\left(\frac{2\theta}{V(P)}\cdot \frac{N}{\log N + A\log\log N}\right)^{1/3},
$$
and in the case $\theta\geqslant \pi/2$, one needs to choose
$$
n_1:=\left(\frac{\pi}{V(P)}\cdot \frac{N}{\log N + A\log\log N}\right)^{1/3}.
$$
For the estimate from above, consider $\mE_{n/2}(L)$ and repeat the estimates for the cube. \hfill $\Box$
\subsection{Estimates for $\textup{\textmd{d}}\mu=\frac{\textup{\textmd{d}}x}{\sqrt{1-x^2}}$}
We remind the reader that $\hat{\rho}(X_N)=\hat{\rho}(X_N, [0,1])=\sup_{y\in [1-\frac{1}{N^2}, 1]} \inf_j |y-x_j|$, where $x_j$, $j=1,\ldots,N$, are randomly and independently distributed over $[0,1]$ with respect to $\mu$.
\subsubsection{Case $a=2$}
Suppose that an interval $I_\alpha:=[1-\frac{\a}{N^2}, 1]$ is disjoint from $X_N$ for some $\alpha>1$. Then we get
$$
\hat{\rho}(X_N)\geqslant \frac{\a-1}{N^2}.
$$
We notice that if $\alpha<C_1\log^2(N)$, and $N$ is sufficiently large, then
$$
\mu(I_\alpha)\leqslant C_2 \frac{\sqrt{\a}}{N}.
$$
Therefore, if $\a$ is some number greater than $1$,
$$
\P\left(\hat{\rho}\geqslant \frac{\a-1}{N^2}\right)\geqslant \left(1-C_2\frac{\sqrt{\a}}N\right)^N \geqslant C_3.
$$
Consequently,
$$
\E\hat{\rho}\geqslant \frac{C_4}{N^2},
$$
where $C_4 = C_3(\a-1)$.

For the estimate from above, notice that $\mu(I_\a)\geqslant \sqrt{\a}/(\sqrt{2}\pi N)$. Assuming $\hat{\rho}(X_N)\geqslant \frac{\a}{N^2}$, we get that the distance from $1$ to any $x_j$ exceeds $\a/N^2$, and thus the interval $[1-\frac{\a}{N^2}, 1]$ is disjoint from $X_N$. The probability of this event is less than
$$
\left(1-C_5\frac{\sqrt{\a}}{N}\right)^N \leqslant e^{-C_5\sqrt{\a}}.
$$
Thus, for any $\a$, $1<\a<N^2$, it follows that
$$\P\left(\hat{\rho}(X_N)\geqslant \frac{\a}{N^2}\right)\leqslant e^{-C_5\sqrt{\a}}.
$$
In particular, for sufficiently large $C_6$ we have
$$\P\left(\hat{\rho}(X_N)\geqslant \frac{C_6\log^2(N)}{N^2}\right)\leqslant N^{-3}.
$$
Therefore,
$$
\E\hat{\rho}(X_N)\leqslant \frac{1}{N^2} + \sli_{\a=1}^{C_6\log^2(N)}\frac{\a+1}{N^2}e^{-C_5\sqrt{\a}} + N^{-3}.
$$
It is easy to see that the latter expression is bounded by $C_7/N^2$, which completes the proof for this case. 

\subsubsection{Case $0<a<2$}
We again notice that, if $\a$ is a number and $I=[\a, \a+\ep]\subset [1-\frac{1}{N^a},1]$ is an interval of length $\ep$, then
$$
\mu(I)=\ili_{\a}^{\a+\ep} \frac{dt}{\pi\sqrt{1-t^2}}\geqslant \frac{1}{\pi} \frac{\ep}{\sqrt{1-\a^2}}\geqslant \frac{1}{\pi} \frac{\ep}{\sqrt{1-(1-\frac{1}{N^a})^2}}\geqslant C_1 \ep N^{\frac{a}2}.
$$
Now consider $n$ intervals of length $\frac{1}{nN^a}$ (and thus having $\mu$-measure $\mu$ greater than $\frac{C_1}{nN^{\frac{a}2}}$) inside $[1-\frac{1}{N^a}, 1]$. As we have seen before, if $\hat{\rho}(X_N)>\frac{2}{nN^a}$, then for some $y\in [1-\frac{1}{N^a}, 1]$ the interval of length $\frac{2}{nN^a}$ centered at $y$ is disjoint from $X_N$; thus one of the fixed intervals of length $\frac{1}{nN^a}$ is disjoint from $X_N$. Consequently,
$$
\P\left(\hat{\rho}(X_N)\geqslant \frac2{nN^a}\right)\leqslant n\left(1-\frac{C_1}{nN^{\frac{a}{2}}}\right)^N.
$$
With
$$
n:=\frac{N^{1-\frac{a}{2}}}{A\log N},
$$
where $A$ large enough, we get
$$
\P\left(\hat{\rho}(X_N)\geqslant \frac2{nN^a}\right)\leqslant n\left(1-\frac{C_1}{nN^{\frac{a}{2}}}\right)^N \leqslant N^{-3}.
$$
Therefore,
$$
\E\hat\rho \leqslant C\frac{\log N}{N^{1+\frac{a}2}} + N^{-3},
$$
which finishes the estimate from above.

\bigskip

For the estimate from below we notice that if $I=[\a, \a+\ep]\subset [1-\frac{1}{N^a},1-\frac{1}{2N^a}]$, then
$$
\mu(I)\leqslant C_2 \frac{\ep}{\sqrt{1-(1-\frac{1}{2N^a})^2}}\leqslant C_2\frac{\ep}{N^\frac{a}{2}}.
$$

Take $n$ intervals in $[1-\frac{1}{N^a},1-\frac{1}{2N^a}]$ of length comparable to $\frac{1}{nN^a}$ and having equal $\mu$-measures $C_3\frac{1}{nN^\frac{a}2}$ (notice that if we are allowed to take such intervals near $1$, then the best measure we can get is $\frac{1}{\sqrt{nN^a}}$). If one of them is disjoint from $X_N$, then $\hat{\rho}(X_N)\geqslant \frac{C_4}{nN^a}$. Thus,
$$
\P\left(\hat\rho(X_N)\geqslant \frac{C_4}{nN^a}\right)\geqslant f\left(N, nN^{\frac{a}2}/C_3, n\right).
$$
It is easy to see that if we take
$$
n:=\frac{N^{1-\frac{a}2}}{A\log N-B\log\log N}
$$
for suitable $A$ and $B$, then the latter expression tends to one. Recall that $0<a<2$. Therefore, for large values of $N$ we have
$$
\P\left(\hat\rho(X_N) \geqslant C_4\frac{\log N}{N^{1+\frac{a}2}}\right) \geqslant \frac 12,
$$
which completes the proof for this case. 
\subsubsection{The estimate for $\tilde\rho$}
For the estimate from above simply notice that for any interval $I$ we have $\mu(I)\geqslant |I|$. For the estimate from below take the interval $[-\frac12, \frac12]$. For any interval $I\subset [-\frac12, \frac12]$ we have $\mu(I)\leqslant C|I|$, and thus the estimate from below runs as usual.

\hfill $\Box$

\end{document}